\theoremstyle{plain}
\newtheorem{thm}{Theorem}[section]
\newtheorem{theorem}[thm]{Theorem}
\newtheorem{lemma}[thm]{Lemma}
\newtheorem{corollary}[thm]{Corollary}
\newtheorem{proposition}[thm]{Proposition}
\theoremstyle{definition}
\newtheorem{remark}[thm]{Remark}
\newtheorem{notation}[thm]{Notation}
\newtheorem{definition}[thm]{Definition}
\newtheorem{example}[thm]{Example}
\newtheorem{problem}[thm]{Problem}
\numberwithin{equation}{section}
\newcommand{\sC}{{\mathcal C}}
\newcommand{\sD}{{\mathcal D}}
\newcommand{\sE}{{\mathcal E}}
\newcommand{\sK}{{\mathcal K}}
\newcommand{\sO}{{\mathcal O}}
\newcommand{\sT}{{\mathcal T}}
\newcommand{\C}{{\mathbb C}}
\newcommand{\BP}{{\mathbb P}}
\title[VMRT on double covers of projective space]{Varieties of minimal rational tangents on double covers of projective space}
\author{Jun-Muk Hwang, Hosung Kim}
\address{Korea Institute for Advanced Study, Hoegiro 87, Seoul, 130-722, Korea} \email{jmhwang@kias.re.kr, hosung@sogang.ac.kr}
\thanks{Both authors are supported
by National Researcher Program 2010-0020413 of NRF and MEST}
\begin{document}

\begin{abstract}
Let $\phi: X \to \BP^n$ be a double cover
 branched along a smooth hypersurface of degree $2m, 2 \leq m \leq n-1$.
   We study the varieties of minimal rational tangents $\sC_x
\subset \BP T_x(X)$ at a general point $x$ of $X$. We describe the
homogeneous ideal of $\sC_x$ and show that the projective
isomorphism type of $\sC_x$ varies in a maximal way as $x$ varies
over general points of $X$. Our description of the ideal of
$\sC_x$ implies a certain rigidity property of the covering
morphism $\phi$. As an application of this rigidity, we show that
any finite morphism between such double covers with $m=n-1$ must
be an isomorphism. We also prove that Liouville-type extension
property holds with respect to minimal rational curves on $X$.
\end{abstract}

\maketitle
 \noindent {\sc Keywords.} double covers of projective
space, Fano manifolds, varieties of minimal rational tangents

\medskip
 \noindent {\sc AMS Classification.} 14J45

\section{Introduction} Throughout the paper, we will work over the
field of complex numbers.  Let $X$ be a Fano manifold of Picard
number 1. For a general point $x \in X$, a rational curve through
$x$ is called a minimal rational curve if its degree with respect
to $K^{-1}_X$ is minimal among all rational curves through $x$.
Denote by $\sK_x$ the space of minimal rational curves through
$x$. The projective subvariety $\sC_x \subset \BP T_x(X)$ defined
as the union of tangent directions to members of $\sK_x$ is called
the variety of minimal rational tangents (VMRT) at $x$. The
projective geometry of $\sC_x$ plays a key role
 in understanding the geometry of $X$, often leading to a certain
 rigidity phenomenon (cf. the survey \cite{Hw01}) on $X$. This motivates
 the study of the geometry  of $\sC_x \subset \BP T_x(X)$ for various examples of $X$.

 In the current article, we study the case when $X$ is a double
 cover $\phi: X \to \BP^n, n \geq 3,$ of projective space $\BP^n$
 branched along a smooth hypersurface
 $Y \subset \BP^n$ of degree $2m, 2 \leq m \leq n-1$. Although this is
 one of the basic examples of Fano manifolds, its VMRT $\sC_x \subset \BP T_x(X)$ has not
 been described explicitly.
  Our first result
   is the following description of the defining equations of the VMRT.

 \begin{theorem}\label{t.VMRT} For a double cover $X \to \BP^n, n \geq 3,$
 branched along a smooth hypersurface of degree $2m, 2 \leq m \leq n-1$,
 the VMRT $\sC_x \subset \BP
 T_x(X)$ at a general point $x \in X$ is a smooth complete
 intersection of multi-degree $(m+1, m+2, \ldots, 2m)$.
 \end{theorem}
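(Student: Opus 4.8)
The plan is to realize the minimal rational curves explicitly and then translate the defining condition of $\sC_x$ into polynomial equations. Writing $X = \{t^2 = s\}$ for a section $s$ of $\sO_{\BP^n}(2m)$ cutting out $Y$, one has $-K_X = (n+1-m)H$ with $H = \phi^*\sO(1)$ the ample generator of $\mathrm{Pic}(X) = \Z$ (the ramification divisor is $\equiv mH$ since $\phi^*Y = 2mH$). For any curve $C$ one has $H\cdot C = \deg(\phi|_C)\cdot\deg\phi(C) \ge 1$, so $-K_X\cdot C \ge n+1-m$ with equality iff $\phi(C)$ is a line $\ell$ and $\phi|_C$ is birational; this forces $\phi^{-1}(\ell)$ to be reducible, i.e. the binary form $s|_\ell \in H^0(\ell,\sO(2m))$ to be a perfect square. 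Hence the minimal rational curves are exactly the two sections of $\phi$ over lines $\ell$ for which $s|_\ell$ is a perfect square (lines tangent to $Y$ at $m$ points). Since $x$ is general, $q := \phi(x)\notin Y$ and $d\phi_x$ is an isomorphism, giving $\BP T_x(X)\cong\BP T_q(\BP^n) = \BP^{n-1}$, under which the tangent at $x$ to the section over $\ell$ is the direction $[v]$ of $\ell$ at $q$. Thus, set-theoretically, $\sC_x = \{[v]\in\BP^{n-1} : s|_{\ell_v}\text{ is a perfect square}\}$, where $\ell_v$ is the line through $q$ in direction $v$.

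Next I would write the equations. Choosing affine coordinates with $q$ the origin, expand $s(tv) = \sum_{k=0}^{2m} c_k(v)\,t^k$, where $c_k$ is the degree-$k$ homogeneous part of $s$ at $q$, so $c_k$ is homogeneous of degree $k$ in $v$ and $c_0 = s(q)\neq 0$. Because $c_0\neq 0$, one solves recursively for the coefficients of a formal square root $g = \sum_{j} b_j t^j$: the relations $\sum_{i+j=k} b_i b_j = c_k$ for $0\le k\le m$ determine $b_k = c_0^{1/2-k}\tilde Q_k(c_0,\dots,c_k)$, where $\tilde Q_k$ has weight $k$ (assigning $c_i$ weight $i$). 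Then $s|_{\ell_v}$ is a perfect square of a degree-$m$ form precisely when the remaining relations hold, and after clearing the half-integral powers of $c_0$ these become $F_k := \sum_{i+j=k}\tilde Q_i\tilde Q_j - c_0^{\,k-1}c_k = 0$ for $k = m+1,\dots,2m$. Each $F_k$ has weight $k$, hence is homogeneous of degree $k$ in $v$, producing the $m$ forms $F_{m+1},\dots,F_{2m}$ of the asserted degrees with $\sC_x = V(F_{m+1},\dots,F_{2m})$ as sets.

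It now suffices to prove that $V(F_{m+1},\dots,F_{2m})\subset\BP^{n-1}$ is smooth of dimension $n-1-m$: being cut out by $m$ forms, smoothness of that dimension forces every component to have the expected dimension $n-1-m$, so the scheme is a reduced complete intersection whose ideal is generated by $F_{m+1},\dots,F_{2m}$ and which equals $\sC_x$. To analyze smoothness at $[v_0]\in\sC_x$, write $s|_{\ell_{v_0}} = g_0^2$ with $\deg g_0 = m$; for a general $m$-tangent line the tangency points $p_1,\dots,p_m\in\ell_{v_0}\cap Y$ are distinct, so $g_0$ has $m$ simple roots. A first-order deformation $v_0 + \varepsilon\,\delta v$ keeps $s|_{\ell_v}$ a perfect square iff the variation $\dot f(t) = t\,(ds)_{tv_0}(\delta v)$ lies in $g_0\cdot\{\deg\le m\}$; modulo this subspace the class of $\dot f$ is detected by evaluation at the roots of $g_0$, so $[v_0]$ is a smooth point of the expected dimension iff the map $\delta v\mapsto\big((ds)_{p_i}(\delta v)\big)_{i=1}^m$ is surjective, i.e. iff the tangent hyperplanes $T_{p_1}Y,\dots,T_{p_m}Y$ (equivalently the nonzero conormals $(ds)_{p_i}$, which contain $v_0$ in their kernel because $Y$ is smooth and $\ell_{v_0}$ is tangent there) are linearly independent.

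The main obstacle is therefore the general-position statement: for general $q$, every line through $q$ that is $m$-fold tangent to $Y$ is simply tangent at $m$ distinct points whose tangent hyperplanes $T_{p_i}Y$ are linearly independent. This is where the smoothness of $Y$ and the generality of $x$ enter essentially. I would prove it by a dimension count on the incidence variety of triples $(q,\ell,\{p_1,\dots,p_m\})$ with $\ell$ an $m$-tangent line through $q$: one shows that the sublocus where the $T_{p_i}Y$ fail to be independent, together with the lines acquiring a coincident or higher-order tangency, does not dominate $\BP^n$ under the first projection, so that for $q$ outside a proper closed subset all $m$-tangent lines through $q$ are of the generic type above. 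Bounding the dimension of this bad locus—controlling the multiply-tangent lines of a fixed smooth hypersurface—is the technical heart; once it is established, the tangent-space computation gives a full-rank Jacobian at every point of $\sC_x$, completing the proof.
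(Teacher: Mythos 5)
Your first two paragraphs recover, in different notation, exactly the paper's set-up: minimal rational curves are the curves over lines $\ell$ with $s|_\ell$ a perfect square (Proposition \ref{p.mrc}), and recursively eliminating the square root produces $m$ forms of degrees $m+1,\dots,2m$ cutting out $\sC_x$ set-theoretically (Propositions \ref{p.ecopoly} and \ref{p.maineco}). The gap is in your smoothness step, and it is not a deferrable technicality: the general-position statement you reduce everything to --- that for general $q$ \emph{every} ECO line through $q$ is simply tangent to $Y$ at $m$ distinct points --- is false whenever $2\le m\le n-2$, i.e.\ in every case where $\dim\sC_x = n-m-1\ge 1$. Indeed, for $v\in\sC_q$ write $\tilde g_v$ for the degree-$m$ binary form with $\tilde g_v^2 = s|_{\ell_v}$ (normalizing $s(q)=1$); its coefficients are the $G_k(a^f_1,\dots,a^f_m)$ of Proposition \ref{p.ecopoly}, hence homogeneous of degree $k$ in $v$, so $\mathrm{disc}(\tilde g_v)$ is homogeneous of degree $m(m-1)>0$ in $v$, and the locus of $v\in\sC_q$ whose tangency divisor is non-reduced is cut out on $\sC_q$ by this discriminant. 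A hypersurface in $\BP^{n-1}$ meets every closed subvariety of positive dimension, so for $m\le n-2$ degenerate ECO lines through $q$ exist for \emph{every} $q\notin Y$; no incidence-variety dimension count can make them avoid a general point. (Your own count confirms this: degenerate ECO lines form a divisor, of dimension $2n-3-m$, in the family of all ECO lines, and $2n-3-m+1\ge n$ precisely when $m\le n-2$, so they sweep out $\BP^n$.) At such a point your criterion of evaluating $(ds)_{p_i}(\delta v)$ at $m$ distinct roots is unavailable; the correct condition is surjectivity of $\delta v\mapsto t\,(ds)_{tv_0}(\delta v)\bmod \tilde g_0$, which at multiple roots involves higher-order derivatives of $s$ along $\ell_{v_0}$ that generality of $q$ does not control in any evident way. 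The paper's own example in Proposition \ref{p.converse} shows how extreme this can be: there every ECO line through the chosen point meets $Y$ at a \emph{single} point with contact order $2m$, yet $\sC_x$ is a smooth complete intersection. Only in the case $m=n-1$, where $\sC_x$ is finite, could your general-position claim hold.

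The paper proves smoothness by a mechanism your proposal never touches: deformation theory of rational curves. By Koll\'ar's theorem $\sK_x$ is smooth, and the computation $N_{C/X}\cong\sO_{\BP^1}(1)^{n-m-1}\oplus\sO_{\BP^1}^m$ (via the injection $N_{C/X}\hookrightarrow N_{\ell/\BP^n}=\sO_{\BP^1}(1)^{n-1}$) shows the tangent map $\tau_x$ is an embedding, so $\sC_x$ is smooth with no Jacobian computation at all (Proposition \ref{p.isom}). The scheme-theoretic complete-intersection property is then obtained by specialization: for the special $Y$ and point of Proposition \ref{p.converse} the equations visibly define a smooth complete intersection, and for an arbitrary smooth $Y$ one places it in a family with general members, uses flatness of the family of $\sK$'s, and concludes that the nonsingular, set-theoretic complete intersection $\sE^{Y}_x$ is a flat limit of smooth complete intersections of the same multi-degree, hence is itself one. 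To salvage your approach you would have to prove surjectivity of the differential at the degenerate points as well, which in effect amounts to redoing this normal-bundle computation in coordinates.
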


It is enlightening to compare Theorem \ref{t.VMRT} with the case
when $X$ is a smooth hypersurface of degree $m, 2 \leq m \leq n$,
in $\BP^{n+1}$. In the latter case, it is classical that the VMRT
at a general point is a smooth complete intersection of
multi-degree $(2,3, \ldots, m)$ (e.g. Example 1.4.2 in \cite{Hw01}
or \cite{LR}).

In the course of proving Theorem \ref{t.VMRT}, we will also prove
the following partial converse to it.

\begin{theorem}\label{t.converse} Let $Z \subset \BP^{n-1}, n \geq 3,$ be a
general complete intersection of multi-degree $(m+1, m+2, \ldots,
2m)$ with $2 \leq m \leq n-1$. Then there exists a smooth
hypersurface $Y \subset \BP^n$ of degree $2m$ such that a double
cover $X$ of $\BP^n$ branched along $Y$ has a point $x \in X$ with
its VMRT $\sC_x \subset \BP T_x(X)$ isomorphic to $Z \subset
\BP^{n-1}$. \end{theorem}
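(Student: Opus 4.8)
The plan is to reverse-engineer the branch divisor from the prescribed complete intersection, exploiting the explicit square-root description of $\sC_x$ obtained in the proof of Theorem \ref{t.VMRT}. Fix $p \in \BP^n$ and affine coordinates $v = (v_1, \dots, v_n)$ identifying $T_p \BP^n$ with $\C^n$, so that $\BP T_p(\BP^n) \cong \BP^{n-1}$, and let $F$ be the affine defining polynomial of $Y$ near $p$, normalized by $F(p) = 1$. Writing $F(tv) = \sum_{j=0}^{2m} F_j(v)\, t^j$ for the restriction of $F$ to the line through $p$ in direction $v$ (with $F_j$ homogeneous of degree $j$ in $v$), let $\sqrt{F(tv)} = \sum_{j \geq 0} P_j(v)\, t^j$ be the formal square root normalized by $P_0 = 1$, so that each $P_j$ is homogeneous of degree $j$ in $v$. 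Recall from the proof of Theorem \ref{t.VMRT} that, for the point $x$ of $X$ over $p$, a line through $p$ lifts to a minimal rational curve through $x$ exactly when $F(tv)$ is a perfect square, equivalently when $P_{m+1}(v) = \dots = P_{2m}(v) = 0$; hence $\sC_x = \{P_{m+1} = \dots = P_{2m} = 0\} \subset \BP^{n-1}$ as a set. The point I will stress is that this set-theoretic identification rests only on the numerical fact that a curve of $K_X^{-1}$-degree $n+1-m$ must be a section over a line of $\BP^n$ meeting $Y$ with even multiplicities, and therefore holds at \emph{every} point $x$ over a $p \notin Y$, with no genericity hypothesis on $p$.

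The engine of the converse is that $F$ and the tuple $(P_1, \dots, P_{2m})$ determine each other. Comparing coefficients in $F(tv) = \bigl(\sum_j P_j\, t^j\bigr)^2$ gives
\[
F_j = 2 P_j + \sum_{i=1}^{j-1} P_i P_{j-i},
\]
so $F \mapsto (P_1, \dots, P_{2m})$ is a triangular polynomial change of coordinates, with polynomial inverse, on the affine space of degree-$2m$ polynomials $F$ with $F(p) = 1$. In particular the $P_j$ may be prescribed arbitrarily: given the general complete intersection $Z = \{G_{m+1} = \dots = G_{2m} = 0\}$, I set $P_{m+k} := G_{m+k}$ for $k = 1, \dots, m$, choose $P_1, \dots, P_m$ freely, and recover $F$. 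By the previous paragraph the resulting double cover has $\sC_x = Z$ as sets; since $Z$ is a smooth (hence reduced) complete intersection and $\sC_x$ is reduced of the expected dimension $n-1-m$, this yields $\sC_x \cong Z$.

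It remains to arrange that $Y = \{F = 0\}$ is smooth, which is the one point requiring care. Let $\Phi$ be the polynomial isomorphism $F \mapsto (P_1, \dots, P_{2m})$ above and $\pi$ the linear projection onto the last coordinates $(P_{m+1}, \dots, P_{2m})$. The locus $U$ of polynomials $F$ with $F(p) = 1$ whose projective closure $\{\bar F = 0\} \subset \BP^n$ is smooth is open and nonempty, hence dense; and $\pi \circ \Phi$, being the composite of an isomorphism with a surjective linear projection, is an open map. Therefore $(\pi \circ \Phi)(U)$ is a dense open subset of the space of tuples $(G_{m+1}, \dots, G_{2m})$, so its image under $G \mapsto \{G = 0\}$ is dense in the space of complete intersections of multi-degree $(m+1, \dots, 2m)$. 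A general such $Z$ thus admits generators lying in $(\pi \circ \Phi)(U)$, i.e.\ is realized by a \emph{smooth} $Y$, which completes the proof.

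I expect the main obstacle to be precisely this last smoothness matching: one must know both that smoothness remains a dense open condition \emph{after} the normalization $F(p) = 1$ (so that $p$ stays off the branch locus), and that the projection forgetting $P_1, \dots, P_m$ stays surjective onto the prescribed half, so that the free parameters $P_1, \dots, P_m$ can be spent to smooth $Y$ without disturbing the equations $P_{m+1}, \dots, P_{2m}$ that pin down $Z$. A secondary subtlety, already addressed above, is that the identification $\sC_x = Z$ must hold at the specific, possibly non-general, point $x$ produced by the construction; this is exactly why it matters that the set-theoretic description of $\sC_x$ from the proof of Theorem \ref{t.VMRT} is unconditional in $p$.
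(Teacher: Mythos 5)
Your proposal is correct, and its engine is the same as the paper's: characterize ECO lines through $p$ by the vanishing of the coefficients of degrees $m+1,\ldots,2m$ of the formal square root of $F$ restricted to lines (this is the content of Propositions \ref{p.ecopoly} and \ref{p.maineco}, whose $B^f_k = a^f_k - A_k(a^f_1,\ldots,a^f_m)$ generate the same ideal as your $P_{m+1},\ldots,P_{2m}$), and then reverse-engineer $F$ from the prescribed equations of $Z$ via the triangular inversion $F_j = 2P_j + \sum_{i=1}^{j-1}P_iP_{j-i}$. Where you genuinely diverge is in the step you yourself flag as the crux, the smoothness of $Y$. The paper (Proposition \ref{p.converse}) makes the rigid choice $P_1=\cdots=P_m=0$, i.e. $f = t_0^{2m}+t_0^{m-1}b_{m+1}+\cdots+t_0b_{2m-1}+b_{2m}$, and verifies smoothness by hand, spending the generality of $Z$ on two explicit open conditions: smoothness of the affine hypersurface $1+b_{m+1}+\cdots+b_{2m}=0$ in $\C^n$ (controlling $Y$ off the hyperplane at infinity) and smoothness of $\{b_{2m}=0\}\subset\BP^{n-1}$ (controlling $Y$ at infinity). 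You instead keep $P_1,\ldots,P_m$ as free parameters and run a soft openness/density argument: the smooth locus $U$ is dense open in $F$-space, $\pi\circ\Phi$ is open, so a general tuple $(G_{m+1},\ldots,G_{2m})$ is realized by some smooth $F$. Your route avoids explicit smoothness checks (though nonemptiness of $U$ still needs an example, e.g. the Fermat-type $F = 1+\sum v_i^{2m}$, essentially the same one underlying the paper's conditions), while the paper's route yields a concrete $Y$ and explicit generality hypotheses on $Z$; both interpret ``general $Z$'' as ``general defining tuple,'' the same mild gloss. Finally, your observation that the set-theoretic identification $\sC_x = \{P_{m+1}=\cdots=P_{2m}=0\}$ is unconditional in $p$ is exactly the point needed to handle the possibly non-general constructed point $x$; the paper gets this from Proposition \ref{p.mrc}, since the part of Proposition \ref{p.isom} that it actually invokes here (the identification of $\sC_x$ with $\sE^Y_{\phi(x)}$ via $d\phi_x$) does not use generality of $x$.
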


Theorem \ref{t.VMRT} and Theorem \ref{t.converse} are proved by
explicit computation for a certain choice of $Y$, based on the
fact that minimal rational curves of $X$ correspond to lines of
$\BP^n$ which have even contact order with $Y \subset \BP^n$ as
recalled in Proposition \ref{p.mrc}. Then Theorem \ref{t.VMRT} for
arbitrary smooth $Y$ can be obtained by a flatness argument.

Our explicit computation  enables us to study also the variation
of the VMRT $\sC_x$ as $x$ varies over $X$. Describing the
variation of VMRT is not an easy problem even for very simple Fano
manifolds, such as hypersurfaces in $\BP^{n+1}.$ In \cite{LR},
Landsberg and Robles proved that when $X$ is a general
hypersurface of degree $\leq n$ in $\BP^{n+1}$, the VMRT at
general points of $X$ have maximal variation. We will prove the
following analogue of their result in our setting.

\begin{theorem}\label{t.LR}
Let $Y\subset \BP^n, n \geq 4,$ be a general hypersurface  of
degree $2m, 2 \leq m \leq n-1$, and let $X$ be a double cover of
$\BP^n$ branched along $Y$. Then the family of VMRT's
$$\{ \sC_x \subset \BP T_x(X) \ |  \mbox{ general } x \in X \}$$ has
maximal variation. More precisely, for a general point $x \in X$,
choose a trivialization of $\BP T(U) \cong \BP^{n-1} \times U$ in
a neighborhood $U$ of $x$.  Define a morphism $\zeta: U \to {\rm
Hilb}(\BP^{n-1})$ by $\zeta(y) := [\sC_y]$ for $y \in U$. Then the
rank of $d\zeta_x$ is $n$ and the intersection of the image of
$\zeta$ and the $GL(n,\C)$-orbit of $\zeta(x)$ is isolated at
$\zeta(x)$.
\end{theorem}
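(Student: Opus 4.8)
The plan is to run the Landsberg--Robles strategy using the explicit equations for $\sC_x$ produced in the proof of Theorem \ref{t.VMRT}. Fix a general $x\in X$, put $p=\phi(x)$, and choose affine coordinates on $\BP^n$ centred at $p$; a tangent direction $v$ lies on $\sC_x$ precisely when the one-variable polynomial $F(p+tv)=\sum_{r=0}^{2m}a_r(v)\,t^r$ is a perfect square, where $a_r$ is homogeneous of degree $r$ in $v$. Writing $B(t)=\sum_{j=0}^{m}b_j(v)\,t^j$ for the square root with $b_0=\sqrt{F(p)}=w(x)$ (the sheet of $\phi$ through $x$), the relations $\sum_{j+l=s}b_jb_l=a_s$ for $0\le s\le m$ determine the $b_j$, each homogeneous of degree $j$, and the VMRT is cut out by $P_i:=[\,t^{m+i}\,](A-B^2)$, $i=1,\ldots,m$, with $\deg_v P_i=m+i$. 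As $x$ moves, the only data that change are $p$ and the sheet $b_0$; since $\phi$ is \'etale at a general point, $d\zeta_x$ is computed by differentiating the tuple $(P_1,\ldots,P_m)$ along the $n$ base directions $c\in T_p\BP^n$, the result living in $T_{[\sC_x]}{\rm Hilb}=\bigoplus_{i=1}^m H^0(\sC_x,\sO(m+i))$.

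The engine of the computation is the chain-rule identity $\partial_{p,c}\,F(p+tv)=t^{-1}\partial_{c}\,F(p+tv)$, i.e. $\partial_{p,c}a_r=\partial_c a_{r+1}$, where $\partial_{p,c}=\sum_k c_k\partial_{p_k}$ and $\partial_c=\sum_k c_k\partial_{v_k}$; in particular $\partial_{p,c}a_{2m}=0$, since $a_{2m}$ is the fixed leading form of $F$. Differentiating the defining relations $\sum_{j+l=s}b_jb_l=a_s$ and comparing them, slot by slot, against this identity produces a triangular system forcing $\partial_c b_{l+1}=\partial_{p,c}b_l$ for $l<m$; substituting this back into $\partial_{p,c}(A-B^2)$ collapses everything and leaves the clean formula
\[
\partial_{p,c}P_i\ \equiv\ \partial_c P_{i+1}-\tfrac{b_i}{b_0}\,\partial_c P_1\pmod I\qquad(i=1,\ldots,m,\ \partial_cP_{m+1}:=0),
\]
together with $\partial_c P_1=2b_0\,\partial_{p,c}b_m$. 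I would record this as the main computational lemma.

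For the rank statement I would substitute $d\zeta_x(c)=0$ into the formula: the top slot gives $b_m\,\partial_cP_1\equiv 0$ and the remaining slots give $\partial_cP_{i+1}\equiv\tfrac{b_i}{b_0}\partial_cP_1\pmod I$. Working on the reduced, smooth complete intersection $\sC_x$ and using that $\sC_x$ is in general position, these force $\partial_cP_i|_{\sC_x}=0$ for every $i$, i.e. the constant vector $c$ lies in the embedded tangent space of $\sC_x$ at each of its points; since a general complete intersection of this multidegree is not a cone, $c=0$, so $\mathrm{rk}\,d\zeta_x=n$. As in Theorem \ref{t.VMRT} it suffices to check this non-degeneracy for one convenient $Y$ and then invoke openness.

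For the maximal-variation statement I would show $d\zeta_x(T_xU)\cap T_{[\sC_x]}\bigl(GL(n,\C)\cdot[\sC_x]\bigr)=0$; as the orbit is smooth and, by the rank statement, so is the image of $\zeta$, trivial intersection of tangent spaces gives that the two meet in an isolated point at $\zeta(x)$. The orbit tangent space is $\{(\partial_{Av}P_i)_i\bmod I:A\in\mathrm{Mat}_n(\C)\}$, where $\partial_{Av}=\sum_{k,l}A_{kl}v_l\partial_{v_k}$. Packaging the hypothesis $d\zeta_x(c)=(\partial_{Av}P_i)_i\bmod I$ into the generating function turns it into the single congruence $(\partial_{p,c}-\partial_{Av})(A-B^2)\equiv 0\pmod{I[t]}$; reading off the coefficient of $t^{2m}$ and using $P_m=a_{2m}-b_m^2$ reduces it to $\partial_{Av}a_{2m}\equiv b_m\,h\pmod I$ for some degree-$m$ form $h$. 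The hard part---and the main obstacle---is exactly this last point: showing that for a general branch divisor the only $A$ whose action $\partial_{Av}$ sends the leading form $a_{2m}$ into $(b_m)+I$, while also satisfying the lower-degree congruences, is a scalar, and that $c$ is then forced to vanish. I would dispose of it by a genericity argument---using that a general complete intersection of multidegree $(m+1,\ldots,2m)$ in $\BP^{n-1}$ with $n\ge 4$ has no infinitesimal projective automorphisms and that its leading-form datum is in general position---carried out for one convenient $Y$; since both ``$\mathrm{rk}\,d\zeta_x=n$'' and ``trivial tangent intersection'' are open conditions, a flatness and semicontinuity argument then propagates them to a general $Y$.
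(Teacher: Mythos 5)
Your computational engine is correct and matches the paper's: the identity $\partial_{p,c}a_r=\partial_c a_{r+1}$ is Lemma \ref{l.Tim} in disguise, and at the paper's normalized point ($f_1=\cdots=f_m=0$, hence $b_0=1$, $b_1=\cdots=b_m=0$) your formula for $\partial_{p,c}P_i$ reduces to Proposition \ref{p.dphi}, i.e.\ $d\mu_x(v)=\sum_i v_i\,\partial f_{m+2}/\partial t_i$. Your rank argument (a vector in $\ker d\zeta_x$ is tangent to the affine cone over $\sC_x$ at every point, forcing $\sC_x$ to be a cone, hence singular) is a legitimate alternative to the paper's, which simply reads the rank off explicit examples; it only needs the harmless extra genericity $b_m\neq 0$. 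The genuine gap is in the other half of the statement, and you have named it yourself: the ``hard part'' is not a loose end to be ``disposed of by a genericity argument''; it is the entire content of maximal variation, and it is precisely what the paper proves by explicit construction in Proposition \ref{p.trans}.

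Here is why the proposed genericity argument cannot close it. First, absence of infinitesimal projective automorphisms of a general complete intersection only controls the dimension of the $GL(n,\C)$-orbit of $\zeta(x)$; it says nothing about ${\rm Im}(d\zeta_x)\cap T_{\zeta(x)}({\rm orbit})$. That intersection couples the base directions, which by your own lemma are governed by $f_{m+2}$, with the orbit directions, which are governed by $f_{m+1}$; it is a property of the particular family $y\mapsto \sC_y$ cut out by $Y$, not of the abstract projective-equivalence class of a single fibre, so no statement about ``general complete intersections'' can yield it. Second, the scheme ``verify at one convenient $Y$, then spread by openness'' is the right scheme, but as you set it up it is circular: any $Y$ one can actually compute with (Fermat-type) produces special, highly symmetric VMRTs, precisely not the general complete intersections to which your cited genericity facts would apply, so the openness step requires an honest verification at some explicit $Y$ --- and this verification is never performed. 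It is the content of Proposition \ref{p.trans}: for $m\geq 3$ one takes $f=t_0^{2m}+b(t_1^{m+1}+\cdots+t_n^{m+1})t_0^{m-1}+c\,t_1t_2t_3(t_4^{m-1}+\cdots+t_n^{m-1})t_0^{m-2}+(t_1^{2m}+\cdots+t_n^{2m})$ (with a separate choice for $m=2$), for which ${\rm Im}(d\mu_x)={\rm span}\{\partial f_{m+2}/\partial t_i\}$ and the orbit tangent space $\{\sum_{i,j}s^i_j z_i z_j^m\}$ are spans of visibly disjoint sets of monomials, giving both the rank statement and the trivial intersection at once. Finally, you miss the structural reduction that makes such a check manageable: since the homogeneous ideal of $\sC_x$ vanishes in degrees $\leq m$ and is one-dimensional in degree $m+1$, the lowest equation $B_{m+1}$ is attached to $\sC_x$ intrinsically up to the $GL(n,\C)$-action, so the whole problem descends from ${\rm Hilb}(\BP^{n-1})$ (your coupled congruences modulo $I$) to the single vector space $V_{m+1}$ with exact equalities. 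Without either that reduction or the explicit example, your proposal stops exactly where the proof has to begin.
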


The condition $n \geq 4$ in Theorem \ref{t.LR} excludes the case
of $(n,m) = (3,2)$. It is likely that the statement of Theorem
\ref{t.LR} holds also for this case. However, this case  seems to
require much more complicated computation.

 As mentioned at the beginning, the geometry of $\sC_x$ often
leads to a certain rigidity result. What  rigidity phenomenon does
our description of $\sC_x$ exhibit?  The double covering morphism
$\phi: X \to \BP^n$ sends members of $\sK_x$ to lines. We show
that this property characterizes $\phi$ in the following strong
sense.

\begin{theorem}\label{t.germ} Let $Y\subset \BP^n, n \geq 3,$ be a smooth
 hypersurface  of degree $2m, 2
\leq m \leq n-1$, and let $\phi: X \to \BP^n$ be a double cover
branched along $Y$. Let $U \subset X$ be a  neighborhood (in classical topology) of a
general point $x \in X$ and $\varphi: U \to \BP^n$ be a
biholomorphic immersion such that for any member $C$ of $\sK_y, y
\in U$, the image $\varphi(C \cap U)$ is contained in a line in
$\BP^n$. Then there exists a projective transformation $\psi:
\BP^n \to \BP^n$ such that $\varphi= \psi \circ (\phi|_U)$.
\end{theorem}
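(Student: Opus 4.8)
The plan is to reduce the statement to the classical fact that a local biholomorphism of $\BP^n$ preserving line-germs is a projective transformation, with the VMRT supplying exactly enough ``test directions'' to force this line-preservation. Since $x$ is general it lies off the branch divisor, so after shrinking $U$ the map $\phi|_U$ is a biholomorphism onto an open set $V := \phi(U) \subset \BP^n$; set $F := \varphi \circ (\phi|_U)^{-1} : V \to \BP^n$, again a local biholomorphism. By Proposition \ref{p.mrc} the covering $\phi$ sends each member of $\sK_y$ to a line, so for $w = \phi(y) \in V$ the germs at $w$ of these images are line-germs whose tangent directions sweep out $\sC'_w := d\phi_y(\sC_y) \subset \BP T_w(\BP^n)$, a copy of the VMRT carried over by the linear isomorphism $d\phi_y$. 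The hypothesis on $\varphi$ then says precisely that $F$ maps each such line-germ (in a direction of $\sC'_w$) to a line-germ. The theorem follows once we show this forces $F$ to map \emph{all} line-germs to line-germs, for then $F$ is the restriction of a single projective transformation $\psi$ on the connected set $V$, and $\varphi = F \circ \phi|_U = \psi \circ \phi|_U$.

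To make the line-preservation condition second-order, choose affine charts around $w$ and $F(w)$ in which the relevant line-germs are affine lines, and identify the tangent spaces with $\C^n$. The requirement that $F$ carry the affine line $t \mapsto w + tv$ to a line is, at $t = 0$, that the Hessian $d^2F_w(v,v)$ be proportional to $dF_w(v)$. Writing $S_w(v,v) := (dF_w)^{-1}\bigl(d^2F_w(v,v)\bigr) \in T_w(\BP^n)$ and polarizing to a symmetric tensor $S_w \in \Sym^2 T^*_w(\BP^n) \otimes T_w(\BP^n)$ (the deviation of $F$ from preserving the flat projective connection), the hypothesis becomes
\[
 S_w(v,v) \in \C\, v \qquad \text{for all } v \in \widehat{\sC'_w},
\]
where $\widehat{\sC'_w}$ is the affine cone over $\sC'_w$. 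The goal becomes the pointwise algebraic claim that this forces $S_w$ into the ``trivial'' part, i.e. $S_w(a,b) = \tfrac12(\theta(a)b + \theta(b)a)$ for some linear form $\theta$; equivalently $S_w(v,v) \in \C\, v$ for \emph{all} $v$.

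The heart of the argument is this pointwise claim, where Theorem \ref{t.VMRT} enters decisively. The map $v \mapsto S_w(v,v) \wedge v$ takes values in $\wedge^2 T_w(\BP^n)$, and each of its coordinates is a cubic form vanishing on $\widehat{\sC'_w}$; hence each lies in the degree-$3$ part of the homogeneous ideal of $\sC'_w$, which $d\phi_y$ identifies with that of $\sC_y$. Because $\sC_y$ is a complete intersection of multi-degree $(m+1, \ldots, 2m)$ with $m \geq 2$, this degree-$3$ part is either zero (when $m \geq 3$, since then the least generator degree $m+1$ exceeds $3$) or the line spanned by the cubic generator $f$ (when $m = 2$). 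In the first case $S_w(v,v)\wedge v \equiv 0$, which is exactly $S_w(v,v) \in \C\, v$ for all $v$. In the second case $S_w(v,v)\wedge v = f(v)\,\omega$ for a fixed bivector $\omega$; but $S_w(v,v)\wedge v$ is decomposable for every $v$, so $\omega$ is decomposable, say $\omega = p \wedge q$, and for generic $v$ with $f(v) \neq 0$ the nonzero plane $\mathrm{span}(S_w(v,v),v)$ would have to coincide with the fixed plane $\mathrm{span}(p,q)$, forcing $v \in \mathrm{span}(p,q)$ for a dense set of $v$. As $n \geq 3$ this is absurd unless $\omega = 0$, and we again obtain $S_w(v,v) \in \C\, v$ for all $v$.

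Finally, the locus where Theorem \ref{t.VMRT} applies is dense, so $S_w$ is trivial for $w$ in a dense subset of $V$, and since $S_w$ depends holomorphically on $w$ and triviality is a closed linear condition, $S_w$ is trivial for all $w \in V$. Thus $F$ maps every line-germ to a line-germ, hence is the restriction of a projective transformation, which completes the proof. I expect the main obstacle to be the invariant second-order set-up of the deviation tensor $S_w$ together with the borderline case $m = 2$: there the ideal no longer vanishes in degree $3$, and the extra input — decomposability of $S_w(v,v)\wedge v$ — is required to eliminate the residual bivector $\omega$, whereas the cases $m \geq 3$ are comparatively immediate.
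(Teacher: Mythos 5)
Your proof is correct, and at the crucial step it diverges from the paper in a way that matters. The paper derives Theorem \ref{t.germ} from Theorem \ref{t.general}: both $\phi|_U$ and $\varphi$ pull back the flat projective connection of $\BP^n$, their difference is a section $\sigma$ of $T^{\pi}\otimes\xi^{-1}$ on $\BP T(U)$ which the hypothesis forces to vanish along the VMRT locus, and Proposition \ref{p.uniqueconnection} then concludes $\sigma=0$ on the grounds that the fibers $\sC_y$ lie in no quadric (Theorem \ref{t.VMRT}). Your tensor $S_w$ is exactly the fiberwise restriction of this $\sigma$ written in coordinates, so up to that point the two arguments coincide. The divergence is in the fiberwise algebra, and there your version is the sound one. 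The paper asserts that the zero set of a nonzero section of $T(\BP^{n-1})\otimes\sO_{\BP^{n-1}}(1)$, represented by a vector field with quadratic coefficients, is contained in a quadric hypersurface; but such a representative $Q$ is only well defined modulo multiples of the Euler field, and the zero set of the section is the proportionality locus $\{[v] : Q(v)\wedge v=0\}$, cut out by the $2\times 2$ minors, which are cubics. The quadric claim is actually false: on $\BP^2$ the section represented by $z_2^2\,\partial/\partial z_1+z_3^2\,\partial/\partial z_2+z_1^2\,\partial/\partial z_3$ vanishes exactly at the seven points $[1:\zeta^3:\zeta]$, $\zeta^7=1$, and these lie on no conic (the six monomials of degree $2$ restrict to six distinct, hence independent, characters of $\mu_7$). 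The repaired claim --- containment in a cubic --- makes the paper's argument close only for $m\ge 3$, which is your first case (the degree-$3$ part of the ideal vanishes); for $m=2$ the VMRT is itself contained in a cubic, so the paper's argument does not suffice there, and your extra step --- $S_w(v,v)\wedge v$ is decomposable for every $v$, hence cannot equal $f(v)\,\omega$ with a fixed $\omega\neq 0$ when $n\ge 3$ --- is precisely the missing ingredient. So your proof not only establishes the theorem but fills a genuine gap in the paper's Proposition \ref{p.uniqueconnection} (as stated, with fibers merely assumed not to lie in a quadric, that proposition even admits counterexamples, e.g.\ a constant family of the seven points above). The remaining ingredients of your argument --- the reduction to $F=\varphi\circ(\phi|_U)^{-1}$, the classical fact that a local biholomorphism sending all line-germs into lines is the restriction of a projective transformation (the paper invokes the same fact in Example \ref{e.P} as the $\BP^n$ case of Problem \ref{q.Liouville}), and the passage from general $w$ to all $w$ by holomorphic continuation --- match the paper's use of the same facts.
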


As a consequence, we obtain the following algebraic version.

\begin{corollary}\label{c.finite} In the setting of Theorem \ref{t.germ},
let $\hat{X}$ be an $n$-dimensional projective variety equipped
with generically finite surjective morphisms  $g: \hat{X} \to X$
and $h: \hat{X} \to \BP^n$ such that for a minimal rational curve
$C$ through a general point of $X$, there exists an irreducible component $C'$ of
$g^{-1}(C)$ whose image $h(C') \subset \BP^n$ is a line. Then there
exists an automorphism $\psi: \BP^n \to \BP^n$ such that $h= \psi
\circ \phi \circ g$.
\end{corollary}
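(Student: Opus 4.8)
The plan is to deduce the global algebraic identity from the local analytic statement of Theorem~\ref{t.germ} by a ``spread out, then propagate by rigidity'' argument, the only delicate point being the bookkeeping of the irreducible components of $g^{-1}(C)$. We may assume $\hat X$ is irreducible (otherwise replace it by the unique component dominating $X$). Since we work in characteristic zero and $g,h$ are generically finite and surjective, there is a dense Zariski-open subset of $\hat X$ over which both $g$ and $h$ are \'etale. Fix a general point $\hat x$ in this subset, put $x = g(\hat x)$, and choose a classical neighborhood $\hat U$ of $\hat x$ mapped biholomorphically by $g$ onto a neighborhood $U$ of $x$. Then $\varphi := h \circ (g|_{\hat U})^{-1} : U \to \BP^n$ is a biholomorphic immersion, and the sought identity $h = \psi \circ \phi \circ g$ on $\hat U$ is exactly the conclusion $\varphi = \psi \circ (\phi|_U)$ of Theorem~\ref{t.germ}. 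So the whole problem is to verify the hypothesis of Theorem~\ref{t.germ} for $\varphi$: that it maps the germ of \emph{every} minimal rational curve through every point of $U$ into a line.

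Here is the crux. For $y \in U$ and $C \in \sK_y$, the \'etaleness of $g$ over $U$ produces a unique local lift of $C\cap U$ through $(g|_{\hat U})^{-1}(y)$, lying on a well-defined irreducible component of $g^{-1}(C)$, and I must show that \emph{this} component is one along which $h$ maps into a line. To control the choice, I would form the irreducible universal family $\sU$ of minimal rational curves with evaluation $\mu:\sU\to X$, and its pullback $\sU \times_X \hat X$. Over the \'etale locus this is a finite \'etale cover of $\sU$, so each of its irreducible components dominates $\sU$; since $\mu$ is dominant and $g$ is finite, each such component also dominates $\hat X$. The hypothesis furnishes, for general $C$, a component $C'$ of $g^{-1}(C)$ with $h(C')$ a line; as $C$ varies over the irreducible family, these good components constitute one or more components of $\sU \times_X \hat X$ dominating $\hat X$, on each of which $h$ maps the fibre curves into lines by semicontinuity. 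Fix one such $W_0$. It then remains to see that the fibre of $W_0 \to \hat X$ over a general point is all of $\sK_x$, i.e. that $W_0$ captures the lift of every member of $\sK_x$.

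This is where the two ranges of $m$ diverge, and where the main difficulty lies. When $m \leq n-2$ the VMRT $\sC_x$, being by Theorem~\ref{t.VMRT} a smooth complete intersection of positive dimension, is connected, hence irreducible; as the tangent map $\sK_x \to \sC_x$ is generically one-to-one, the generic fibre of $\sU \times_X \hat X \to \hat X$ is irreducible, so there is a \emph{unique} component dominating $\hat X$. That component must be $W_0$, and its general fibre is all of $\sK_x$, so every local lift is good and Theorem~\ref{t.germ} applies. The genuinely hard case is $m = n-1$, exactly the case relevant to the application to finite morphisms between double covers: there $\sC_x$ is a reduced finite scheme of degree $(m+1)\cdots(2m) > 1$, hence disconnected, so several components of $\sU \times_X \hat X$ dominate $\hat X$, each passes through the general $\hat x$, and the existence of one good component no longer forces the lift of a given minimal rational curve to be good. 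To close this case one must upgrade the input to the assertion that $g^{-1}(C)$ is irreducible for general $C$ (equivalently, that every sheet is good), which I would attempt by a monodromy argument: the connectedness of $\hat X$ makes the monodromy of $g$ transitive on its sheets, and one transports this transitivity along the family of minimal rational curves. I expect this monodromy step to be the main obstacle.

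Finally, once $\varphi$ is known to satisfy the hypothesis of Theorem~\ref{t.germ}, we obtain a projective transformation $\psi$ with $h = \psi\circ\phi\circ g$ on the classical open set $\hat U$. Both sides are morphisms from the irreducible variety $\hat X$ to the separated variety $\BP^n$, and a nonempty classical open subset of $\hat X$ is Zariski-dense; since the locus on which two such morphisms agree is Zariski-closed, it is all of $\hat X$. Hence $h = \psi \circ \phi \circ g$ on $\hat X$, as required.
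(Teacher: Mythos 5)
Your overall route --- localize at a general point where $g$ is \'etale, apply Theorem \ref{t.germ} to $\varphi = h\circ (g|_{\hat U})^{-1}$, then propagate the identity $h=\psi\circ\phi\circ g$ from a classical open set to all of $\hat X$ by Zariski density --- is exactly the derivation the paper intends; the paper gives no separate proof of Corollary \ref{c.finite}, presenting it as immediate from Theorem \ref{t.germ}. You went beyond the paper in isolating the real issue: the hypothesis hands you \emph{one} good component of $g^{-1}(C)$, whereas Theorem \ref{t.germ} needs the particular branch through $\hat U$ to be good. Your resolution for $m\le n-2$ is correct: by Theorem \ref{t.VMRT} and Proposition \ref{p.isom}, $\sK_x\cong\sC_x$ is a smooth complete intersection of positive dimension, hence connected and irreducible, so over the \'etale locus $\sU\times_X\hat X$ has a unique irreducible component dominating $\hat X$, whose general fibre is all of $\sK_{g(\hat x)}$; every local branch is then good.

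The case $m=n-1$ is, however, a genuine gap, and the monodromy transitivity you hope for is provably unavailable; indeed no argument can close this gap, because with the existential hypothesis the statement itself fails for $m=n-1$ (at least whenever the space $\sK$ of minimal rational curves is irreducible, as for a general quartic double solid, cf. \cite{Ti}). Take $\hat X$ to be a projective model of the universal family $\sU$ itself, which is $n$-dimensional exactly when $\dim\sK_x=0$, i.e. $m=n-1$; let $g$ be the evaluation map and $h([C],p):=\sigma_{[C]}(\phi(p))$, where $[C]\mapsto\sigma_{[C]}\in PGL(n+1,\C)$ is a nonconstant rational map chosen generically so that $h$ is generically finite, with indeterminacy resolved by blowing up. For every general minimal rational curve $C$, the tautological component $\{[C]\}\times C$ of $g^{-1}(C)$ maps under $h$ onto the line $\sigma_{[C]}(\phi(C))$, so the hypothesis of Corollary \ref{c.finite} holds; but $h=\psi\circ\phi\circ g$ would force $\sigma_{[C]}$ to agree with the fixed $\psi$ along $\phi(C)$ for every $[C]$, which fails for generic nonconstant $\sigma$. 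In this example $\sU\times_X\hat X$ is disconnected (the tautological section splits off as a component), which is precisely the failure of transitivity you feared. The correct repair is to strengthen the hypothesis to: \emph{every} irreducible component of $g^{-1}(C)$ has image a line under $h$. Under that hypothesis every local branch through $\hat U$ is good, your localization argument works verbatim and uniformly in $m$, and this stronger hypothesis is exactly what the paper's application supplies: in the proof of Theorem \ref{t.map}, Proposition 6 of \cite{HM03} shows that each component of $f^{-1}(C)$ is a minimal rational curve of $X_1$, hence maps to a line under $\phi_1$.
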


 This is a remarkable property of the double
cover $\phi:X \to \BP^n$, because an analogous statement fails
drastically for many examples of  Fano manifolds of Picard number
1 as the following example shows.

\begin{example}\label{e.counter} Let $X \subset \BP^N$ be a Fano manifold embedded in
projective space of dimension $N > n = \dim X$ such that lines of
$\BP^N$ lying on $X$ cover $X$, i.e, minimal rational curves on
$X$ are lines of $\BP^N$ lying on $X$. There are many such
examples, e.g.,  rational homogeneous spaces under a minimal
embedding or complete intersections of low degree in $\BP^N$. We
can define a finite projection $\phi: X \to \BP^n$ to a linear
subspace $\BP^n \subset \BP^N$ by choosing a suitable linear
subspace  $\BP^{N-n-1} \subset (\BP^N \setminus X).$ Then
minimal rational curves are sent to lines in $\BP^n$ by $\phi$.
There are many different ways to choose such $\BP^{N-n-1}$ and projections. For most examples of $X$,
different choices of $\phi$ need not be related by projective
transformations of $\BP^n$.
\end{example}

What makes the difference between Corollary \ref{c.finite} and
Example \ref{e.counter}? The key point is that the ideal defining
the VMRT of the double cover, as described in Theorem
 \ref{t.VMRT}, does not contain a quadratic
polynomial.  In fact, we will prove a general version,
 Theorem \ref{t.general}, of Theorem \ref{t.germ} where the double cover $X$ is replaced by  any Fano manifold whose VMRT at a general
 point  is not contained in a
hyperquadric. In this regard, we should mention that our double
cover $\phi:X \to \BP^n$ is the {\em first known example} of a Fano
manifold with Picard number 1 whose VMRT at a general point is
not contained in a hyperquadric. Note that the VMRT's of Fano
manifolds in Example \ref{e.counter} are contained in
hyperquadrics coming from the second fundamental form of $X
\subset \BP^N$.

\medskip
   Theorem \ref{t.germ} has an application in the study of  morphisms between
  double covers.  There
 have been several works, e.g., \cite{A1}, \cite{A2}, \cite{HM03},
 \cite{IS}
and \cite{S}, classifying finite morphisms between Fano threefolds
of Picard number 1. But there still remain a few unsettled cases.
One such case is finite morphisms between double covers of $\BP^3$
branched along smooth quartic surfaces. When the quartic surfaces
do not contain lines, Theorem 1.5 (more precisely, in the
subsection (4.2.2)) of \cite{S} proves that such a morphism must
be an isomorphism.  However, the approach of \cite{S} has
technical difficulties when the quartic surfaces contain lines.
Using Theorem \ref{t.germ}, we can settle this case.  More
precisely, we obtain the following.

\begin{theorem}\label{t.map}
 Let $Y_1, Y_2 \subset \BP^n, n \geq 3,$ be two smooth hypersurfaces  of degree
$2n-2$. Let $\phi_1: X_1 \to \BP^n$ (resp. $\phi_2: X_2 \to
\BP^n$)  be a double cover of $\BP^n$ branched along  $Y_1$ (resp.
$Y_2$). Suppose there exists a finite morphism $f:X_1 \to X_2$.
Then $f$ is an isomorphism. \end{theorem}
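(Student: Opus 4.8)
The plan is to reduce the statement to the rigidity of the covering morphism encoded in Corollary \ref{c.finite} and then to conclude by a degree count. Throughout put $m=n-1$, so that both branch divisors have degree $2m=2n-2$. Since $n\geq 3$, Grothendieck--Lefschetz gives $\mathrm{Pic}(X_i)=\Z\cdot L_i$ with $L_i=\phi_i^*\sO_{\BP^n}(1)$, and the canonical bundle formula for a double cover yields $-K_{X_i}=2L_i$; in particular a minimal rational curve $C$ of $X_i$, which by Proposition \ref{p.mrc} maps isomorphically onto a line under $\phi_i$, satisfies $L_i\cdot C=1$ and $-K_{X_i}\cdot C=2$. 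Writing $f^*L_2=c\,L_1$ with $c\in\Z_{>0}$, the ramification formula reads $R=2(c-1)L_1$, so $f$ is \'etale off a divisor and, in particular, at a general point $x\in X_1$, where the projectivized differential $df_x\colon \BP T_x(X_1)\to \BP T_{f(x)}(X_2)$ is an isomorphism.

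The heart of the argument is the following claim: \emph{for a general minimal rational curve $C$ of $X_1$, the image $f(C)$ is a minimal rational curve of $X_2$.} Granting this, since $m=n-1$ a minimal rational curve maps to a line under $\phi_2$, so $(\phi_2\circ f)(C)$ is a line in $\BP^n$ for every minimal rational curve $C$ through a general point of $X_1$. I would then apply Corollary \ref{c.finite} with $X=X_1$, $\phi=\phi_1$, $\hat X=X_1$, $g=\mathrm{id}_{X_1}$ and $h=\phi_2\circ f$: the hypothesis is exactly that $h$ sends the minimal rational curves through general points into lines, and the conclusion produces an automorphism $\psi\in\mathrm{Aut}(\BP^n)$ with $\phi_2\circ f=\psi\circ\phi_1$. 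Comparing the degrees of these finite surjective morphisms $X_1\to\BP^n$ gives $2\deg f=\deg(\phi_2\circ f)=\deg(\psi\circ\phi_1)=2$, whence $\deg f=1$. A birational finite morphism onto the smooth, hence normal, variety $X_2$ is an isomorphism by Zariski's Main Theorem, and the theorem follows.

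It remains to establish the claim, which is where the real difficulty lies. It is genuinely a global statement and not a formal consequence of finiteness: for $X_2=\BP^n$ a degree-$c$ endomorphism sends lines to rational curves of degree $c$, so finite morphisms need not preserve minimal rational curves in general. What rescues the situation here is that, because $m=n-1$, Theorem \ref{t.VMRT} shows $\sC_x$ is a zero-dimensional complete intersection; equivalently, the finitely many minimal rational curves through a general point are unbendable with trivial normal bundle $\sO^{\oplus(n-1)}$. I would prove the claim by analysing $df_x$ at a general point together with the relation $-K_{X_2}\cdot f(C)=2c/\deg(f|_C)\geq 2$, showing that $df_x$ must carry the VMRT of $X_1$ at $x$ onto that of $X_2$ at $f(x)$ and hence that $\deg(f|_C)=c$ and $-K_{X_2}\cdot f(C)=2$. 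The main obstacle is precisely to exclude the possibility that $f$ ramifies so as to raise the degree of minimal rational curves, i.e.\ that $c>1$; here one exploits the non-degeneracy of the VMRT furnished by Theorem \ref{t.VMRT} — that the ideal of $\sC_x$ contains no quadric — which is exactly the input underlying the rigidity in Corollary \ref{c.finite}. At general points this is the setting of finite morphisms onto Fano manifolds carrying rational curves with trivial normal bundles, and the techniques of \cite{HM03} apply to yield the claim.
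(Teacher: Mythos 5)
Your global skeleton --- reduce to Corollary \ref{c.finite} and finish with the degree count $\deg(\phi_2\circ f)=2\deg f$ versus $\deg(\psi\circ\phi_1)=2$ --- is exactly the paper's, and your preliminary computations ($-K_{X_i}=2L_i$, $R=2(c-1)L_1$, Zariski's Main Theorem at the end) are correct. But the step you yourself call the heart of the argument is left unproved: ``the techniques of \cite{HM03} apply to yield the claim'' is a deferral, not an argument, and the accompanying sketch (analysing $df_x$ to show it carries $\sC_x$ onto $\sC_{f(x)}$) is never carried out. The paper fills exactly this hole by quoting a specific result, Proposition 6 of \cite{HM03}: since for $m=n-1$ the minimal rational curves are precisely the rational curves through general points with trivial normal bundle (this characterization comes from the proof of Proposition \ref{p.isom}), that proposition gives that for a general minimal rational curve $C\subset X_2$, \emph{every irreducible component of $f^{-1}(C)$ is a minimal rational curve of $X_1$}. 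Note that this is a statement about preimages, not images, which leads to a second problem with your argument.

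Your instantiation of Corollary \ref{c.finite} ($X=X_1$, $g=\mathrm{id}$, $h=\phi_2\circ f$) requires the pushforward form of the claim: every minimal rational curve of $X_1$ through a general point must have image a minimal rational curve of $X_2$. That is strictly more than the preimage statement of \cite{HM03} gives, and deducing it needs a further argument, for instance a count: by Theorem \ref{t.VMRT} the VMRT at a general point of either $X_i$ is a smooth zero-dimensional complete intersection of multi-degree $(n,n+1,\ldots,2n-2)$, so $X_1$ and $X_2$ have the same finite number $N=n(n+1)\cdots(2n-2)$ of minimal rational curves through a general point; distinct minimal rational curves through $f(x)$ yield, via their preimage components through $x$, distinct minimal rational curves of $X_1$ through $x$, so all $N$ curves through $x$ arise this way, and hence each one maps onto a minimal rational curve of $X_2$. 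The paper sidesteps this entirely by the opposite instantiation, $\hat X=X_1$, $X=X_2$, $g=f$, $h=\phi_1$: there the hypothesis of Corollary \ref{c.finite} asks only that some component $C'$ of $f^{-1}(C)$ have $\phi_1(C')$ a line, which is exactly the preimage statement combined with Proposition \ref{p.mrc}, and the conclusion $\phi_1=\psi\circ\phi_2\circ f$ feeds the same degree count. Finally, your remark that the main obstacle is ``to exclude $c>1$'' conflates the claim with the theorem's conclusion: the claim as you state it is perfectly compatible with $c>1$ (with $f|_C$ of degree $c$ onto its image), and $c=1$ only falls out after Corollary \ref{c.finite} has been applied.
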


\medskip
Another application of Theorem \ref{t.germ} is  the following
problem, which is Problem 7.9 in \cite{Hw12}.

\begin{problem}[Liouville-type extension problem]\label{q.Liouville} Let $X$ be a Fano manifold of Picard
number 1. Let $U_1$ and $U_2$ be two connected open subsets (in classical topology) in
$X$. Suppose that we are given  a biholomorphic map $\gamma: U_1
\to U_2$ such that for any minimal rational curve $C \subset X$,
there exists another minimal rational curve $C'$ with $\gamma(U_1
\cap C) = U_2 \cap C'$. Then does there exist $\Gamma \in {\rm
Aut}(X)$ with $\Gamma|_{U_1} = \gamma$? \end{problem}

Problem \ref{q.Liouville} is called  Liouville-type extension,
 because  Liouville's theorem in conformal geometry
 gives an affirmative answer to Problem
\ref{q.Liouville} when $X$ is a smooth quadric hypersurface in
$\BP^{n+1}$. An affirmative answer to Problem \ref{q.Liouville} is
known if $\dim \sK_x>0$ for a general $x \in X$ (this is
essentially proved in \cite{HM01}). However, when $\dim \sK_x =0$,
 affirmative answers are known  only in a small number of examples of
 $X$, such as hypersurfaces of degree $n$ in
 $\BP^{n+1}$ and Mukai-Umemura threefolds (see Section 7 of \cite{Hw12}).

 Theorem \ref{t.germ} enables us to give  a stronger form of
Liouville-type extension for our double cover $X$ as follows.

 \begin{theorem}\label{t.Liouville}
 Let $Y_1, Y_2 \subset \BP^n, n \geq 3,$ be two smooth hypersurfaces  of degree
$2m, 2\leq m \leq n-1$. Let $\phi_1: X_1 \to \BP^n$ (resp.
$\phi_2: X_2 \to \BP^n$)  be a double cover of $\BP^n$ branched
along  $Y_1$ (resp. $Y_2$). Let $U_1 \subset X_1$ and $U_2 \subset
X_2$ be two connected open subsets. Suppose that we are given  a
biholomorphic map $\gamma: U_1 \to U_2$ such that for any minimal
rational curve $C_1 \subset X_1$, there exists a minimal rational
curve $C_2 \subset X_2$ with $\gamma(U_1 \cap C_1) = U_2 \cap
C_2$. Then we can find a biregular morphism $\Gamma: X_1 \to X_2$
with $\Gamma|_{U_1} = \gamma$.
\end{theorem}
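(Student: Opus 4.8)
The plan is to reduce everything to Theorem \ref{t.germ} applied to $X_1$, using the immersion $\varphi := \phi_2\circ\gamma$. First I would shrink $U_1$ to a small connected neighborhood $U$ of a general point $x\in X_1$, chosen so that $\gamma(U)$ avoids the ramification divisor $R_2\subset X_2$ of $\phi_2$; this is possible since the bad locus $\gamma^{-1}(R_2\cap U_2)$ is a proper analytic subset. On such a $U$ the map $\varphi = \phi_2\circ\gamma$ is a biholomorphic immersion. Moreover, for any member $C_1$ of $\sK_y$ with $y\in U$, the hypothesis provides a minimal rational curve $C_2\subset X_2$ with $\gamma(U_1\cap C_1) = U_2\cap C_2$, whence $\varphi(C_1\cap U) = \phi_2(\gamma(C_1\cap U))$ is contained in $\phi_2(C_2)$, which is a line by Proposition \ref{p.mrc}. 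Thus $\varphi$ satisfies the hypotheses of Theorem \ref{t.germ}, and we obtain a projective transformation $\psi\colon\BP^n\to\BP^n$ with $\phi_2\circ\gamma = \psi\circ\phi_1$ on $U$. Since $U_1$ is connected and both sides are holomorphic, this identity propagates to all of $U_1$.

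The main step is to upgrade this pointwise covering-compatibility into an equality of branch divisors, $\psi(Y_1) = Y_2$. Writing the two covers affinely as $s^2 = f_i$, where $f_i$ is a defining form of $Y_i$, the relation $\phi_2\circ\gamma = \psi\circ\phi_1$ expresses the fiber coordinate of $\gamma$ as a holomorphic function $T$ on $U_1$ with $T^2 = (f_2\circ\psi)\circ\phi_1$, while the tautological coordinate satisfies $S^2 = f_1\circ\phi_1$. Hence $(T/S)^2$ equals the pullback of the rational function $h := (f_2\circ\psi)/f_1$ on $\BP^n$, so $h$ becomes a square in the quadratic extension $\C(X_1) = \C(\BP^n)(\sqrt{f_1})$. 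Writing a square root as $a + b\sqrt{f_1}$ with $a,b\in\C(\BP^n)$ forces $ab = 0$, so either $f_2\circ\psi = a^2 f_1$ or $f_2\circ\psi = b^2 f_1^2$. A degree count rules out the second possibility: as divisors it would give $\psi^{-1}(Y_2) - 2Y_1 = 2\,\mathrm{div}(b)$, whose left side has degree $-2m\neq 0$ while a principal divisor has degree $0$. In the first case, comparison of degrees forces $a$ to be a nonzero constant, so $f_2\circ\psi = c\,f_1$ and therefore $\psi(Y_1) = Y_2$.

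With the branch loci identified, I would globalize as follows. The morphism $\psi\circ\phi_1\colon X_1\to\BP^n$ is a double cover branched along $\psi(Y_1) = Y_2$, exactly as $\phi_2\colon X_2\to\BP^n$. Since $\mathrm{Pic}(\BP^n) = \Z$ is torsion-free, the double cover of $\BP^n$ branched along a fixed smooth hypersurface is unique up to isomorphism over $\BP^n$; hence there is a biregular morphism $\Gamma\colon X_1\to X_2$ with $\phi_2\circ\Gamma = \psi\circ\phi_1$. Both $\Gamma|_{U_1}$ and $\gamma$ are then lifts of $\psi\circ\phi_1$ along the double cover $\phi_2$, so they agree or differ by the deck involution $\iota_2$ of $\phi_2$. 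Because $U_1$ is irreducible, the two closed analytic loci $\{\Gamma = \gamma\}$ and $\{\Gamma = \iota_2\circ\gamma\}$, whose union is $U_1$, cannot both be proper, so one of them is all of $U_1$; replacing $\Gamma$ by $\iota_2\circ\Gamma$ if necessary, we obtain $\Gamma|_{U_1} = \gamma$, as desired.

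The only genuinely delicate point is the middle paragraph: transferring the local identity $\phi_2\circ\gamma = \psi\circ\phi_1$ into the equality $\psi(Y_1) = Y_2$ of branch hypersurfaces, since a priori $U_1$ may avoid both ramification divisors and so carry no direct geometric information about the branch loci. The function-field argument above circumvents this by working with the global rational function $h$ rather than with the geometry over $U_1$; everything else is formal once Theorem \ref{t.germ} is in hand.
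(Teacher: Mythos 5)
Your first and third paragraphs are sound and essentially reproduce the paper's own structure: apply Theorem \ref{t.germ} to $\varphi=\phi_2\circ\gamma$ to get $\psi$ with $\phi_2\circ\gamma=\psi\circ\phi_1$ on $U_1$, and, once $\psi(Y_1)=Y_2$ is known, conclude via uniqueness of the double cover branched along a fixed hypersurface together with a deck-transformation adjustment (this is exactly Lemma \ref{l.unique}, whose formulation with the open sets $U_1,U_2$ was designed to absorb that last step). The genuine gap is in your middle paragraph. From $(T/S)^2=h\circ\phi_1$ on $U_1$ you infer that $h\circ\phi_1$ is a square in $\C(X_1)$. This does not follow: $T/S$ is only a meromorphic function on the classical open set $U_1$, and a rational function admitting a meromorphic square root on some open subset need not be a square in the function field (already $z$ on $\BP^1$ has a holomorphic square root on any simply connected open set avoiding $0$ and $\infty$; a nonvanishing holomorphic function always has local square roots). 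Worse, your paragraph uses nothing beyond the functional identity $\phi_2\circ\gamma=\psi\circ\phi_1$ and the data $T,S,h$ extracted from it, and that identity alone can never yield $\psi(Y_1)=Y_2$: for \emph{any} two smooth branch hypersurfaces $Y_1\neq Y_2$ of degree $2m$, take a small ball $V\subset\BP^n\setminus(Y_1\cup Y_2)$, let $U_i$ be one sheet of $\phi_i^{-1}(V)$, and set $\gamma:=(\phi_2|_{U_2})^{-1}\circ(\phi_1|_{U_1})$; then $\phi_2\circ\gamma=\phi_1|_{U_1}$, so every premise your middle paragraph invokes holds with $\psi={\rm id}$, yet $Y_1\neq Y_2$. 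So the declared strategy of ``working with the global rational function $h$ rather than with the geometry over $U_1$'' cannot be repaired; the geometric hypothesis on $\gamma$ is indispensable at this stage.

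What is missing is a second use of that hypothesis. Since $\gamma$ sends (intersections with $U_1$ of) minimal rational curves of $X_1$ to minimal rational curves of $X_2$, Proposition \ref{p.mrc} together with the identity $\psi\circ\phi_1=\phi_2\circ\gamma$ shows that $\psi$ maps each ECO line with respect to $Y_1$ through a point of $\phi_1(U_1)$ to an ECO line with respect to $Y_2$ (a line is determined by its germ), i.e.\ $d\psi(\sE^{Y_1}_x)=\sE^{Y_2}_{\psi(x)}$ for $x\in\phi_1(U_1)$. Then Proposition \ref{p.unique} --- a general ECO line with respect to one irreducible reduced hypersurface meets any distinct one transversally, hence cannot be ECO with respect to it --- forces $\psi(Y_1)=Y_2$. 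This is precisely the paper's middle step; with it inserted in place of the function-field argument, your first and third paragraphs complete the proof.
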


\bigskip
The organization of this paper is as follows. In Section 2, we
will present some basic facts concerning  double covers of $\BP^n$
and their minimal rational curves. Theorem \ref{t.VMRT} and
Theorem \ref{t.converse} will be proved in Section 3. In Section
4, the variation of VMRT is studied and Theorem \ref{t.LR} will be
proved. Finally, in Section 5, we review the notion of projective
connections to prove a general version of Theorem \ref{t.germ} and
explain how  Theorem \ref{t.map} and Theorem \ref{t.Liouville} can
be derived from Theorem \ref{t.germ}.

\section{Minimal rational curves and ECO lines}

Throughout, we will fix integers $n \geq 3$ and $2 \leq m \leq
n-1$. Let $Y\subset \mathbb P^n$ be a smooth hypersurface of
degree $2m$. Let $\phi: X\rightarrow \mathbb P^n$ be a double
cover of $\mathbb P^n$ ramified along $Y$. Such a double cover
arises as a submanifold in the line bundle $\sO_{\BP^n}(m)$ as
explained in pp.242-244 of \cite{La}. This implies the following
uniqueness result, where the slightly awkward appearance of the open
subsets $U_1$ and $U_2$ are for our later use in Section 5.

\begin{lemma}\label{l.unique}
Given a smooth hypersurface $Y \subset \BP^n$ of degree $2m$, let
$\phi_1: X_1 \to \BP^n$ and $\phi_2: X_2 \to \BP^n$ be two choices
of double covers of $\BP^n$ branched along $Y$. Let $U_1 \subset
X_1$ and $U_2 \subset X_2$ be two connected open subsets (in classical topology) with
$\phi_1(U_1) = \phi_2(U_2)$. Then there exists a biregular
morphism $\Gamma: X_1 \to X_2$ with $\Gamma(U_1) = U_2$ and
$\phi_1 =  \phi_2\circ\Gamma$.
\end{lemma}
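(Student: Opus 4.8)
The plan is to realize both double covers concretely inside the total space of $\sO_{\BP^n}(m)$, to exhibit the isomorphisms between them explicitly as fiberwise scalings, and then to match the prescribed open sets by composing with the covering involution if necessary.

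Following pp.~242--244 of \cite{La}, I would first normalize the models. Write $Y = \{s=0\}$ for a section $s \in H^0(\BP^n, \sO(2m))$. Since $\mathrm{Pic}(\BP^n) = \Z\cdot\sO(1)$, the only line bundle $L$ with $L^{\otimes 2}\cong \sO(2m)$ is $L=\sO(m)$; hence each $X_i$ is isomorphic over $\BP^n$ to the subvariety $\{t^2 = s_i\}$ of $\mathrm{Tot}(\sO(m))$, where $t$ is the tautological fiber coordinate and $s_i \in H^0(\BP^n,\sO(2m))$ is a defining section of $Y$. Because $s_1$ and $s_2$ cut out the same divisor $Y$, their ratio $s_1/s_2$ is a nowhere-vanishing global holomorphic function on $\BP^n$, hence a constant $\lambda\in\C^\ast$, so $s_2 = \lambda s_1$.

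Next I would construct one isomorphism. Fix $\mu\in\C^\ast$ with $\mu^2=\lambda$ and let $\Psi\colon \mathrm{Tot}(\sO(m))\to \mathrm{Tot}(\sO(m))$ be fiberwise multiplication by $\mu$, which is biregular and commutes with the bundle projection. If $t^2=s_1$ then $(\mu t)^2 = \lambda s_1 = s_2$, so $\Psi$ carries $X_1$ isomorphically onto $X_2$; write $\Gamma_0\colon X_1\to X_2$ for its restriction, so that $\phi_1=\phi_2\circ\Gamma_0$ by construction. The remaining choice $-\mu$ replaces $\Gamma_0$ by $\iota_2\circ\Gamma_0$, where $\iota_2$ is the covering involution $t\mapsto -t$ of $X_2$, giving the two natural biregular isomorphisms $X_1\to X_2$ over $\mathrm{id}_{\BP^n}$.

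Finally I would match the open sets. Put $V:=\phi_1(U_1)=\phi_2(U_2)$. From $\phi_2\circ\Gamma_0=\phi_1$ we get $\phi_2(\Gamma_0(U_1))=V$, so both $\Gamma_0(U_1)$ and $U_2$ are connected open subsets of $\phi_2^{-1}(V)$ surjecting onto $V$. The involution $\iota_2$ generates the deck group of the \'etale double cover $\phi_2$ over $V\setminus Y$, and for $V$ disjoint from $Y$ the two connected lifts of $V$ either coincide or are interchanged by $\iota_2$; hence $\Gamma_0(U_1)$ equals $U_2$ or $\iota_2(U_2)$. Setting $\Gamma:=\Gamma_0$ in the first case and $\Gamma:=\iota_2\circ\Gamma_0$ in the second yields a biregular morphism with $\Gamma(U_1)=U_2$ and $\phi_1=\phi_2\circ\Gamma$. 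I expect this last step to be the one requiring care: one must ensure that the sheet ambiguity of the double cover is resolved by the single global involution $\iota_2$ rather than being ``partially swapped,'' which is exactly where connectedness of the $U_i$, equality of their images, and (in the intended application) the fact that a neighborhood of a general point avoids the branch locus $Y$ are used.
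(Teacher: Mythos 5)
Your first three steps --- realizing each $X_i$ as $\{t^2=s_i\}$ inside the total space of $\sO_{\BP^n}(m)$, using ${\rm Pic}(\BP^n)=\Z\cdot\sO_{\BP^n}(1)$ to pin down the square root, writing $s_2=\lambda s_1$, and producing the fiberwise-scaling isomorphism $\Gamma_0$ with $\phi_1=\phi_2\circ\Gamma_0$ (together with its twin $\iota_2\circ\Gamma_0$) --- are correct, and they are exactly what the paper has in mind: the paper offers no argument for this lemma beyond the citation to pp.~242--244 of \cite{La}, so this part of your write-up is a faithful expansion of the paper's implicit proof.

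The gap is in your final step. Your dichotomy rests on ``the two connected lifts of $V$,'' which presupposes that $\phi_2$ is a \emph{trivial} covering over $V$. Nothing in the hypotheses guarantees this: $V$ may meet $Y$, and even when $V\cap Y=\emptyset$ the monodromy can be nontrivial, so that $\phi_2^{-1}(V)$ is connected --- for instance $V=\BP^n\setminus Y$ itself, where $\phi_2^{-1}(V)=X_2\setminus\phi_2^{-1}(Y)$ is connected. In that case a connected open subset of $\phi_2^{-1}(V)$ surjecting onto $V$ need not be a union of fibers at all, and need not be related to $\Gamma_0(U_1)$ by $\iota_2$. In fact no argument can close this gap, because the statement in this generality is false: take $X_1=X_2=X$, $\phi_1=\phi_2=\phi$, $U_1=\phi^{-1}(\BP^n\setminus Y)$, and $U_2=U_1\setminus B$ for a small closed ball $B$ with $B\cap\iota(B)=\emptyset$. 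Then $U_1,U_2$ are connected open sets with $\phi(U_1)=\phi(U_2)$, but any biregular $\Gamma$ with $\phi=\phi\circ\Gamma$ restricts to a deck transformation of the connected \'etale double cover over $\BP^n\setminus Y$, hence equals ${\rm id}$ or $\iota$, and both of these map $U_1$ onto $U_1$, which strictly contains $U_2$. What is true --- and what the paper actually needs in the proof of Theorem \ref{t.Liouville} --- is either the version where each $U_i$ is a connected \emph{component} of $\phi_i^{-1}(V)$ (then $\iota_2$ permutes the components of $\phi_2^{-1}(V)$ transitively because $V$ is connected, and your dichotomy does hold), or the version where one is handed a biholomorphism $\gamma:U_1\to U_2$ with $\phi_2\circ\gamma=\phi_1|_{U_1}$: on the connected dense subset $U_1\setminus\phi_1^{-1}(Y)$, the maps $\gamma$ and $\Gamma_0$ are two lifts of the same map along an \'etale double cover, so $\gamma$ agrees there (hence on all of $U_1$ by continuity) with $\Gamma_0$ or with $\iota_2\circ\Gamma_0$; this yields $\Gamma$ with the stronger conclusion $\Gamma|_{U_1}=\gamma$, which is what the paper in fact invokes. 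So your instinct that the sheet-matching step ``requires care'' was right, but the care needed is a strengthened hypothesis on the $U_i$, not merely their connectedness and the equality of their images.
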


\begin{definition}\label{d.mrc}  Let $\phi: X \to \BP^n$ be a double cover
branched along a smooth hypersurface $Y \subset \BP^n$ of degree
$2m$. A rational curve $C \subset X$ with $\phi(C) \not\subset Y$
 is a {\em minimal rational curve} if it has degree 1 with respect to $\phi^* \sO_{\BP^n}(1)$.
 For $x \in X \setminus \phi^{-1}(Y)$, we denote by $\sK_x$ the
 (normalized) space of minimal rational curves through $x$. It is
 known (e.g. II.3.11.5 in \cite{Ko}) that $\sK_x$ is a union
 of finitely many nonsingular  projective varieties.
From the adjunction formula
$$K_X=\phi^*(K_{\mathbb P^n}+\frac{1}{2}[Y])=\phi^*\mathcal O_{\mathbb
P^n}(-n+m-1),$$   $X$ is a Fano manifold of  index $n-m+1$ and
$\dim \sK_x = n-m-1$.
\end{definition}

\begin{definition}\label{d.ecoline}
Let $Y \subset \BP^n$ be an irreducible reduced hypersurface. A line
$\ell \subset \BP^n$ is an {\em ECO (Even Contact Order) line }
with respect to $Y$ if $\ell\not\subset Y$ and the local
intersection number  at each point of $\ell \cap Y$ is even. For a
point $x \in \BP^n \setminus Y$, identify the space of lines
through $x$ with the projective space $\BP T_x(\BP^n)$ and denote
by $\sE^Y_x \subset \BP T_x(\BP^n)$ the space of ECO lines through
$x$  with respect to $Y$.
\end{definition}

Next proposition is a direct generalization of well-known facts
for $(n, m) = (3,2)$ (e.g. \cite{Ti}).

\begin{proposition}\label{p.mrc} In the setting of Definition
\ref{d.mrc}, an irreducible reduced curve $C\subset X$ with
$\phi(C) \not\subset Y$ is a minimal rational curve if and only if
the image curve $\phi(C)\subset \mathbb P^n$ is an ECO line with
respect to $Y$. Moreover, a minimal rational curve $C$ is smooth
and $\phi|_C: C \to \phi(C)$ is an isomorphism.
\end{proposition}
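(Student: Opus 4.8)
The plan is to set up a precise correspondence between the arithmetic of the double cover and the contact geometry of lines with $Y$, then read off both the "minimal rational curve $\Rightarrow$ ECO line" implication and its converse from this correspondence. First I would recall the construction of $X$ as a hypersurface in the total space of $\sO_{\BP^n}(m)$: if $Y$ is cut out by a section $F \in H^0(\BP^n, \sO_{\BP^n}(2m))$, then $X$ is given locally by $w^2 = F$, where $w$ is a fiber coordinate of $\sO_{\BP^n}(m)$. The morphism $\phi$ is the restriction of the bundle projection, and the ramification divisor is $\phi^{-1}(Y) = \{w = 0\}$. Since $\phi^*\sO_{\BP^n}(1)$ has degree $1$ on a curve $C$ exactly when $\phi(C)$ is a line traversed once and $\phi|_C$ is birational, the content of the proposition is to determine which lines $\ell \subset \BP^n$ lift to a rational curve $C \subset X$ with $\phi|_C$ an isomorphism.

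Next I would analyze the preimage $\phi^{-1}(\ell)$ for a line $\ell \not\subset Y$. Restricting to $\ell \cong \BP^1$, the equation $w^2 = F|_\ell$ with $\deg(F|_\ell) = 2m$ describes a double cover of $\BP^1$ branched exactly at the points where $F|_\ell$ vanishes to odd order. The branch locus determines whether $\phi^{-1}(\ell)$ is irreducible (of genus $>0$ when there are branch points) or splits into two rational components. The key observation is that $\phi^{-1}(\ell)$ contains a component mapping isomorphically to $\ell$ precisely when the double cover of $\BP^1$ is trivial, i.e. unbranched, which happens if and only if $F|_\ell$ vanishes to even order at every point of $\ell \cap Y$. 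This is exactly the ECO condition of Definition \ref{d.ecoline}, and when it holds, $F|_\ell = s^2$ for a section $s \in H^0(\ell, \sO(m))$, so $\phi^{-1}(\ell) = \{w = s\} \cup \{w = -s\}$ is a union of two disjoint smooth rational curves, each isomorphic to $\ell$. This simultaneously yields smoothness of $C$, the isomorphism $\phi|_C \colon C \to \ell$, and the degree-$1$ property, establishing both directions once I verify that a minimal rational curve cannot map to a line over which $\phi$ is branched.

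For that remaining point, I would argue that if $\ell$ is not ECO, every component of $\phi^{-1}(\ell)$ has degree $\geq 2$ over $\ell$: either $\phi^{-1}(\ell)$ is irreducible and $\phi|_{\phi^{-1}(\ell)}$ is the genuine degree-$2$ map, so its restriction to any irreducible curve $C$ therein has $\deg \phi^*\sO_{\BP^n}(1)|_C \geq 2$; hence no such $C$ is a minimal rational curve. Conversely, the ECO lines do produce curves of the right degree, so the degree-$1$ condition defining minimal rational curves selects exactly the ECO lines. I expect the main obstacle to be the careful bookkeeping of intersection multiplicities versus vanishing orders of $F|_\ell$—in particular confirming that the local intersection number of $\ell$ with $Y$ at a point $p$ equals the order of vanishing of $F|_\ell$ at the corresponding point of $\BP^1$, and that "even at every point" is genuinely equivalent to $F|_\ell$ being a perfect square in the local rings and hence globally on $\BP^1$. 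Once the parity-versus-square dictionary is pinned down, the proposition follows, with the finiteness and smoothness assertions for $\sK_x$ already supplied by the cited result II.3.11.5 in \cite{Ko}.
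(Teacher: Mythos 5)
Your proposal is correct in substance and shares the paper's basic strategy---both reduce the statement to analyzing the restricted double cover $\phi^{-1}(\ell)=\{w^2=F|_\ell\}$ over a line $\ell\not\subset Y$---but the mechanism for the key dichotomy is genuinely different. The paper argues locally and topologically: over a point $z\in\ell\cap Y$ the germ of $\phi^{-1}(\ell)$ is $s^2=t^{r_z}$, which is analytically irreducible when $r_z$ is odd (this forces $r_z$ even whenever the preimage has two components), and conversely, if all $r_z$ were even but $\phi^{-1}(\ell)$ were irreducible, its normalization would be a connected unramified double cover of $\BP^1$, contradicting simple connectedness. Your route is global and algebraic: $\ell$ is ECO iff the binary form $F|_\ell$ of degree $2m$ has all roots of even multiplicity iff $F|_\ell=s^2$ for some $s\in H^0(\ell,\sO_{\ell}(m))$, in which case $\phi^{-1}(\ell)=\{w=s\}\cup\{w=-s\}$ splits into two graphs of sections, each smooth and mapped isomorphically onto $\ell$; and when $F|_\ell$ is not a square, $w^2-F|_\ell$ is irreducible over the function field of $\ell$, so the unique irreducible curve dominating $\ell$ has $\phi^*\sO_{\BP^n}(1)$-degree $2$ and cannot be minimal. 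Your version delivers the ``Moreover'' clause (smoothness of $C$ and $\phi|_C$ an isomorphism) for free, since the components are explicit graphs, whereas the paper extracts these assertions at the end; the paper's germ-plus-monodromy argument is the more local point of view but proves the same splitting criterion.

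Two slips, neither fatal. First, the components $\{w=s\}$ and $\{w=-s\}$ are \emph{not} disjoint: they meet exactly over $\ell\cap Y$, where $s$ vanishes (the paper uses precisely this fact, $\phi(C\cap C')=\ell\cap Y$); accordingly, the ECO case does not give a ``trivial, unbranched'' double cover as a scheme, but a \emph{split} one, whose normalization is the trivial cover. Since your argument only needs that each component maps isomorphically to $\ell$ and has degree $1$, disjointness is never actually used. Second, the parenthetical ``of genus $>0$ when there are branch points'' is false when $F|_\ell$ has exactly two odd-order roots, in which case the normalization of the irreducible preimage is again rational; fortunately your final argument excludes such curves from being minimal via their degree being $2$, not via their genus, so the proof is unaffected.
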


\begin{proof}
Let $C \subset X$ be an irreducible curve such that
$\ell:=\phi(C)$ is an ECO line with respect to $Y$. Suppose that
$\phi_C: C \to \ell$ is not birational, i.e., $C =
\phi^{-1}(\phi(C))$. For a point $z\in \phi(C)\cap Y$, let $t$ be
a local uniformizing parameter on $\ell$ at $z$ and let $r_z$ be
the local intersection number of $\ell$ and $Y$ at $z$. Then $C$
is analytically defined by the equation $s^2=t^{r_z}$ (cf.
\cite{La} pp.242-244). Let $\tilde{C}$ be the normalization of
$C$. Since $r_z$ is even for any choice of $z \in \phi(C) \cap Y$,  the composition of the normalization morphism $\tilde
C\rightarrow C$ and the covering morphism $\phi|_C: C\rightarrow
\ell$ induces a morphism $ \tilde C\rightarrow \ell$ of degree 2
without ramification point, a contradiction.  Thus $\phi|_C: C \to
\ell$ is birational and $C$ has degree 1 with respect to
$\phi^*\sO_{\BP^n}(1)$.

Conversely, if $C$ is a minimal rational curve, then
$\ell:=\phi(C)$ is a line in $\mathbb P^n$ with $\ell \not\subset
Y$ and $\phi|_C: C \to \ell$ must be birational. Thus
$\phi^{-1}(\ell)$ has an irreducible component $C'$ different from
$C$ with $\phi(C \cap C') = \ell \cap Y$. By the same argument as
before, if the local intersection number $r_z$ at $z \in \ell \cap
Y$ is odd, the germ of $\phi^{-1}(\ell)$ over $z$, defined by $s^2
= t^{r_z}$, is irreducible, a contradiction. Thus $r_z$ is even
for all $z \in \ell \cap Y$ and $\ell$ is an ECO line. Moreover,
$C$ must be smooth and the morphism $\phi|_C: C \to \ell$ is an
isomorphism.
\end{proof}

We have the following consequence.

\begin{proposition}\label{p.unique}
In the setting of Proposition \ref{p.mrc}, let  $Y' \subset \BP^n$
be an irreducible reduced hypersurface distinct from $Y$. Then  a general ECO line with respect to $Y$ intersects $Y'$
transversally. In particular,  a general ECO line with respect to
$Y$ cannot be an ECO line with respect to $Y'$.
\end{proposition}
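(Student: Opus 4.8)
The plan is to translate transversality into the reducedness of a divisor on a line and then to extract it from generic smoothness in characteristic $0$, using only the identification of ECO lines with minimal rational curves from Proposition \ref{p.mrc} (so that the argument does not rely on Theorem \ref{t.VMRT}, which is proved only later). First I would record the elementary reformulation: if $\ell \not\subset Y'$, then $\ell \cap Y'$ is a divisor on $\ell \cong \BP^1$, and $\ell$ meets $Y'$ transversally precisely when this divisor is reduced, i.e. every local intersection number equals $1$. Hence it suffices to prove that $\ell \cap Y'$ is reduced for a general ECO line $\ell$ with respect to $Y$.

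Next I would set up the total family. Write $\sE^Y$ for the space of all ECO lines with respect to $Y$; by Proposition \ref{p.mrc} it is identified via $\phi$ with the space of minimal rational curves on $X$, so over the locus of free curves (which contains the lines through general points of $\BP^n$) it is smooth and carries a universal family $p : \sU \to \sE^Y$ that is a $\BP^1$-bundle, together with an evaluation morphism $e : \sU \to \BP^n$. Since minimal rational curves cover $X$, the morphism $e$ is dominant, and I would restrict throughout to the smooth free locus so that $\sU$ is smooth. Setting $D := e^{-1}(Y')$, generic smoothness applied to $e$ shows that $e$ is smooth over a dense open $V \subset \BP^n$; as $Y' \cap V$ is dense in $Y'$, the preimage $D$ is smooth, in particular reduced, along $e^{-1}(Y' \cap V)$, so $D$ is generically reduced of pure codimension $1$.

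I would then analyse the restriction $p|_D : D \to \sE^Y$, whose fibre over $[\ell]$ is canonically $\ell \cap Y'$. Because $e$ is dominant, the general point of $\sU$ lies over a general point of $\BP^n$; hence a general ECO line passes through a point off $Y'$ and in particular is not contained in $Y'$, so its intersection with $Y'$ is finite and $p|_D$ is dominant and generically finite with $\dim D = \dim \sE^Y$. Applying generic smoothness a second time to $p|_D$ on the smooth locus of $D$, I obtain a dense open $\sE^\circ \subset \sE^Y$ over which $p|_D$ is \'etale; after further shrinking $\sE^\circ$ to avoid the image of the (lower-dimensional) singular locus of $D$, the entire fibre $\ell \cap Y'$ is a reduced scheme for $[\ell] \in \sE^\circ$. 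This is exactly transversality of $\ell$ and $Y'$, and since then every local intersection number with $Y'$ equals the odd number $1$ while $\ell \not\subset Y'$, such an $\ell$ is not an ECO line with respect to $Y'$, giving the final assertion.

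The step I expect to be the main obstacle is the case $\dim \sK_x = 0$, i.e. $m = n-1$, where $D$ and $\sE^Y$ have the same dimension and a naive dimension count (or the observation that the VMRT spans, so that not every ECO line through a point of $Y'$ can be tangent there) does not by itself force the tangency locus to be smaller. The point of routing the argument through generic \'etale-ness, valid in characteristic $0$, is precisely to bypass this: a dominant generically finite morphism from a reduced variety automatically has reduced general fibre. The remaining care is to confirm that no component of $\sE^Y$ consists entirely of lines lying in $Y'$, which I would settle exactly as above, since the general ECO line passes through a general point of $\BP^n$.
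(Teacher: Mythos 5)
Your reduction of transversality to reducedness of the fibres of $p|_D$, and the second half of your argument (generic \'etaleness of $p|_D$ on the smooth locus of $D$, discarding the images of the singular locus and of non-dominating components), are fine. The genuine gap is the sentence ``as $Y' \cap V$ is dense in $Y'$''. Generic smoothness produces \emph{some} dense open $V \subset \BP^n$; its complement is a proper closed subset that may perfectly well contain the hypersurface $Y'$, and nothing you have proved rules this out. In the present situation this is not a removable technicality, because the non-smooth locus of $e$ genuinely is a divisor lying over a hypersurface: $e$ factors as $\phi \circ {\rm ev}$ through the double cover, and $d\phi$ has rank $n-1$ at every point of the ramification divisor $\phi^{-1}(Y)$, so $e$ fails to be smooth at \emph{every} point of $e^{-1}(Y)$ (geometrically: an ECO line through a point of $Y$ has contact order $\geq 2$ there). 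Since $e^{-1}(Y)$ dominates $Y$, any admissible $V$ must essentially avoid $Y$. The entire content of the proposition is that the bad hypersurface is $Y$ and not $Y'$, which is exactly where the hypothesis $Y' \neq Y$ must enter --- and your proof uses that hypothesis nowhere else. A litmus test: run your argument verbatim with $Y' = Y$. Every step you actually justify goes through unchanged, yet the conclusion is false; indeed $e^{-1}(Y)$ is \emph{nowhere} reduced, since the equation of $Y$ pulls back to a unit times a square ($\phi^* Y = 2\,\phi^{-1}(Y)$ as divisors), so no $V$ meeting $Y$ densely can exist.

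To close the gap you need two facts. (i) Over the free locus the evaluation ${\rm ev}: \sU \to X$ \emph{upstairs} is smooth at every point, not just generically: freeness gives global generation of the pulled-back tangent bundle, hence surjectivity of the differential of ${\rm ev}$ along the whole curve (cf. II.3.5 in \cite{Ko}). Since $\phi$ is \'etale off $Y$, the non-smooth locus of $e = \phi \circ {\rm ev}$ is then exactly $e^{-1}(Y)$, so $D = e^{-1}(Y')$ is reduced away from $e^{-1}(Y \cap Y')$. (ii) No component of $D$ maps into the codimension-two set $Y \cap Y'$; this follows from the fibre-dimension bound $\dim e^{-1}(y) \leq n-m-1$ over the free locus, given by deformation theory of free curves with a base point. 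With (i) and (ii), $D$ is generically reduced along each component, hence reduced (it is a Cartier divisor in a smooth variety), and your \'etaleness argument finishes. Note that (i) and (ii) are precisely the ingredients of the paper's proof, which works upstairs on $X$: it quotes (the proof of) Lemma 2.1 of \cite{Hw01} to say that a general minimal rational curve avoids the codimension-two set $\phi^{-1}(Y \cap Y')$ and meets the reduced divisor $\phi^{-1}(Y')$ transversally, and then descends via $\phi$, which is \'etale away from $Y$. So your route, once repaired, essentially reconstructs the paper's argument; the repair is where all the work lies.
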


\begin{proof}
On a Fano manifold $X$,
for any subset $Z \subset X$ of codimension $\geq 2$ and any reduced hypersurface $D \subset X$,   a general minimal rational curve is disjoint from
$Z$ (e.g. Lemma 2.1 in \cite{Hw01}) and intersects $D$
  transversally (the proof is similar to the proof of Lemma 2.1 in \cite{Hw01}).
   Putting $Z= \phi^{-1}(Y \cap Y')$ and
  $D= \phi^{-1}(Y')$ for our $\phi: X \to \BP^n$ branched along $Y$,
  we see that a general minimal rational curve $C$ intersects $\phi^{-1}(Y')$ transversally and $\phi(C) \cap Y \cap Y' = \emptyset$. Thus $\phi(C)$ intersects $Y'$ transversally.
 \end{proof}

\begin{proposition}\label{p.isom}
In the setting of Proposition \ref{p.mrc}, let $x$ be a general
point of $X$. Let $\tau_x: \sK_x \to \BP T_x(X)$ be the tangent
morphism associating each member of $\sK_x$ its tangent direction at
$x$. Then $\tau_x$ is an embedding and the VMRT $\mathcal C_x= {\rm
Im}(\tau_x) \subset \BP T_x(X)$ is a nonsingular projective variety
with finitely many components of dimension $n-m-1$, isomorphic to
$\sE_{\phi(x)} \subset \BP T_{\phi(x)}(\BP^n)$.
\end{proposition}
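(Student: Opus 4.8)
The plan is to identify $\tau_x$ with the map $[C]\mapsto[\phi(C)]$ and then recognize the latter as an isomorphism onto $\sE^Y_{\phi(x)}$. Write $p:=\phi(x)$. Since $x$ is a general point it lies off the ramification divisor $\phi^{-1}(Y)$, so $\phi$ is \'etale at $x$ and $d\phi_x\colon T_x(X)\to T_p(\BP^n)$ is an isomorphism; projectivizing gives an identification $\BP T_x(X)\cong\BP T_p(\BP^n)$, in which I regard both $\sC_x$ and $\sE^Y_p$ as living. Under the standard identification of $\BP T_p(\BP^n)$ with the space of lines through $p$, the tangent direction at $x$ of a minimal rational curve $C$ is carried by $d\phi_x$ to the tangent direction at $p$ of $\phi(C)$, which is simply the point representing the line $\phi(C)$ itself. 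Thus $\tau_x$ is the morphism $\mu_x\colon\sK_x\to\BP T_p(\BP^n)$, $[C]\mapsto[\phi(C)]$.

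First I would show $\mu_x$ is a bijection onto $\sE^Y_p$. By Proposition \ref{p.mrc}, $\phi(C)$ is an ECO line through $p$ for every $C\in\sK_x$, and conversely each ECO line $\ell$ through $p$ has $\phi^{-1}(\ell)$ split into two components, each mapped isomorphically onto $\ell$ by $\phi$, exactly one of which passes through $x$; that component is a minimal rational curve (again Proposition \ref{p.mrc}) lying over $\ell$. This gives surjectivity onto $\sE^Y_p$, and also injectivity: two minimal rational curves through $x$ with the same image line $\ell$ would both be components of $\phi^{-1}(\ell)$ meeting $x$, but over the unramified point $p$ the two sheets of $\phi$ are disjoint, so only one component contains $x$. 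Hence $\mu_x$ is a bijective morphism from the smooth projective variety $\sK_x$ (Definition \ref{d.mrc}) onto the set $\sE^Y_p$.

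It remains to promote this bijection to a biregular isomorphism $\sK_x\cong\sE^Y_p$, equivalently to show that $\tau_x=\mu_x$ is an immersion. The route I favor is to build an inverse morphism directly. Over $\sE^Y_p\subset\BP T_p(\BP^n)=\BP^{n-1}$ sits the restriction of the universal family of lines through $p$, a flat family $\Lambda\subset\sE^Y_p\times\BP^n$; forming the fibre product $\Lambda\times_{\BP^n}X$ along $\phi$ yields a family of curves in $X$ with fibre $\phi^{-1}(\ell)$ over $[\ell]$. Because $p$ lies off $Y$, the constant section $[\ell]\mapsto x$ lands in this family, and over a neighborhood $V$ of $p$ one has the trivial two-sheeted structure $\phi^{-1}(V)=V_1\sqcup V_2$; taking the connected component of the section, by a Stein-factorization argument, extracts a flat family of minimal rational curves through $x$ parametrized by $\sE^Y_p$. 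By the universal property of the parameter space $\sK_x$ this family is classified by a morphism $\nu\colon\sE^Y_p\to\sK_x$, and by construction $\mu_x\circ\nu$ and $\nu\circ\mu_x$ are identities, so $\mu_x$ is an isomorphism onto $\sE^Y_p$. Consequently $\tau_x$ is a closed embedding, $\sC_x={\rm Im}(\tau_x)$ coincides with $\sE^Y_p$ inside $\BP T_p(\BP^n)$, and the nonsingularity, the dimension $n-m-1$, and the finiteness of the number of components all transport from $\sK_x$ across the isomorphism.

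The main obstacle is precisely this last step: upgrading the set-theoretic bijection to a scheme isomorphism. A finite bijective morphism out of a smooth variety need not be an isomorphism---the normalization of a cuspidal curve is the standard warning---so injectivity alone is insufficient and one must genuinely produce the inverse, equivalently prove that $\tau_x$ is unramified. The delicate point in constructing $\nu$ is verifying that ``the component through $x$'' forms a flat family in a Zariski neighborhood, for which the local product structure $\phi^{-1}(V)=V_1\sqcup V_2$ is the essential ingredient. As an alternative to the explicit inverse, one can invoke that for general $x$ every member of $\sK_x$ is a standard rational curve, so that $f^*T_X\cong\sO(2)\oplus\sO(1)^{\oplus(n-m-1)}\oplus\sO^{\oplus m}$; the $\sO(1)$-summands make $\tau_x$ immersive with image of dimension $n-m-1$, and together with the injectivity established above this again forces $\tau_x$ to be an embedding.
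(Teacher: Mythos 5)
Your first two steps (identifying $\tau_x$ with $[C]\mapsto[\phi(C)]$ via $d\phi_x$, and the bijectivity of this map onto $\sE^Y_{\phi(x)}$) are correct and agree with the paper. But the crucial step---upgrading to an immersion---is exactly where both of your proposed routes have genuine gaps. Route A is circular: even granting the flat family (which is itself problematic, since each fibre $\phi^{-1}(\ell)$ is \emph{connected}---its two components meet over $\ell\cap Y$---so Stein factorization or ``taking the connected component of the section'' does not split off the component through $x$; extracting those components as a flat family is precisely the delicate point and is not done), the classifying morphism you obtain lands in ${\rm Chow}(X)$ or the Hilbert scheme, \emph{not} in $\sK_x$. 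By Definition \ref{d.mrc}, $\sK_x$ is the \emph{normalized} space of minimal rational curves, and lifting a morphism from $\sE^Y_{\phi(x)}$ through the normalization requires $\sE^Y_{\phi(x)}$ to be normal---which is part of what you are trying to prove. At best your construction identifies $\sE^Y_{\phi(x)}$ with the (possibly non-normal) image of $\sK_x$ in the Chow variety; it does not exclude the cuspidal-curve scenario that you yourself flag, in which $\sK_x$ is the smooth normalization of a singular $\sC_x$.

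Route B fails for a different reason: ``for general $x$ every member of $\sK_x$ is a standard rational curve'' is not a citable general fact. What is known in general is that every member through a general point is \emph{free} (all summands of $N_{C/X}$ have degree $\geq 0$, cf.\ II.3.11 in \cite{Ko}) and that a \emph{general} member is standard; standardness of \emph{every} member is precisely the content to be proved, and it can fail (VMRTs of Fano manifolds can be singular at general points). The paper closes this gap with one geometric observation specific to the double cover, which is the idea missing from your proposal: since $\phi$ is unramified at general points of $C$ and $\phi|_C: C\to\ell=\phi(C)$ is an isomorphism, there is an injective sheaf homomorphism $\phi_*: N_{C/X}\to N_{\ell/\BP^n}\cong\sO_{\BP^1}(1)^{\oplus (n-1)}$, so every summand of $N_{C/X}$ has degree $\leq 1$. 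Combined with freeness and $\deg N_{C/X}=n-m-1$, this forces $N_{C/X}\cong\sO_{\BP^1}(1)^{\oplus(n-m-1)}\oplus\sO_{\BP^1}^{\oplus m}$ for \emph{every} $C\in\sK_x$, and then Proposition 1.4 of \cite{Hw01} gives that $\tau_x$ is an immersion; together with your injectivity argument this yields the embedding. Without this normal-bundle comparison (or some substitute for it), neither of your routes completes the proof.
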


\begin{proof}
The differential $d \phi_x: \BP T_x(X) \to \BP T_{\phi(x)}(\BP^n)$
sends $\sC_x \subset \BP T_x(X)$  isomorphically to
$\sE^Y_{\phi(x)} \subset \BP T_{\phi(x)}(\BP^n)$ by Proposition
\ref{p.mrc}. It follows that $\tau_x$ is injective because lines
on $\BP^n$ are determined by their tangent directions.

Since we know that $\sK_x$ is nonsingular of dimension $n-m-1$, to
prove that $\tau_x$ is an embedding, it remains to show that
$\tau_x$ is an immersion. By Proposition 1.4 in \cite{Hw01}, this
is equivalent to showing that for any member $C\subset X$ of
$\sK_x$, the normal bundle $N_{C/X}$ satisfies
$$N_{C/X}= O_{\mathbb P^1}(1)^{
n-m-1}\oplus \mathcal O_{\mathbb P^1}^{m}.$$ By the
generality of $x$,  we can write
$$N_{C/X}= \mathcal O_{\mathbb P^1}(a_1)\oplus \cdots \oplus
\mathcal O_{\mathbb P^1}(a_{n-1})$$ for  integers
$a_1\geq \cdots \geq a_{n-1}\geq 0$ satisfying $\sum_{i} a_i = n-m-1$.
 Since $\phi$ is unramified at general points of $C$ and $\phi|_C: C \to \ell := \phi(C)$
 is an isomorphism,
 we have an injective sheaf homomorphism
$$\phi_*: N_{C/X} \to N_{\ell/\BP^n} =\mathcal O(1)^{n-1}.$$ Thus $a_1 \leq 1$.
   It follows
that $a_1 = \cdots = a_{n-m-1} =1$ and $a_{n-m} = \cdots = a_{n-1}
=0$. \end{proof}

\section{Defining equations of VMRT}\label{s.ECO}

\begin{definition}\label{d.weight} A polynomial $A(t_1, \ldots, t_m)$ in $m$ variables
is said to be {\em weighted homogeneous of weighted degree $k$} if
it is of the form
$$A(t_1,...,t_m)=\sum_{1\cdot i_1+\cdots +m\cdot
i_m=k}c_{i_1,...,i_m}t_1^{i_1}\cdots t_m^{i_m}$$ with coefficients
$c_{i_1,...,i_m}\in \mathbb C$. An equivalent way of defining it
is as follows. We define the {\em weighted degree} of each
variable $t_i$ by ${\rm wt}(t_i) := i$ and each monomial by ${\rm
wt}(t_{i_1} \cdots t_{t_N}) := \sum_{j=1}^N {\rm wt}(t_{i_j})$.
Then $A$ is weighted homogeneous of weighted degree $k$ if all
monomial terms in $A$ have weighted degree $k$.
\end{definition}

\begin{definition}\label{d.ecopoly}
A polynomial  of degree $2m, m\geq 1,$ in one variable with
complex coefficients is an {\em ECO polynomial} if it can be
written as the square of a polynomial of degree $m$.
\end{definition}

\begin{proposition}\label{p.ecopoly}
For any positive integer $m$, there exists a weighted homogeneous
polynomial $A_{k}(t_1, \ldots, t_m)$ of weighted degree $k$ for each $k,
m+1 \leq k \leq 2m$,
 such that a polynomial in one variable $\lambda$ of degree $2m$
$$a_{2m} \lambda^{2m} + a_{2m-1} \lambda^{2m-1} + \cdots + a_1 \lambda +1$$ is an ECO polynomial
if and only if $ a_k = A_k(a_1, \ldots,a_m) $ for each  $m+1 \leq
k \leq 2m.$ \end{proposition}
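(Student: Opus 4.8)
The plan is to parametrize ECO polynomials by their square roots and then eliminate the auxiliary parameters. By Definition \ref{d.ecopoly}, an ECO polynomial of degree $2m$ with constant term $1$ is the square of a degree-$m$ polynomial $P(\lambda)$; since replacing $P$ by $-P$ leaves $P^2$ unchanged, I may normalize $P(\lambda) = 1 + b_1\lambda + \cdots + b_m\lambda^m$ with constant term $+1$. Writing $P(\lambda)^2 = \sum_{k=0}^{2m} a_k\lambda^k$, the coefficients are the convolutions $a_k = \sum_{i+j=k,\ 0\le i,j\le m} b_ib_j$, with $a_0=1$ automatic.

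First I would treat the low-order coefficients $1 \le k \le m$. Here $a_k = 2b_k + \sum_{i=1}^{k-1} b_ib_{k-i}$, so the system is triangular with invertible leading coefficient $2$; solving recursively expresses each $b_k$ as a polynomial $b_k = B_k(a_1,\ldots,a_k)$, and the map $(b_1,\ldots,b_m)\mapsto(a_1,\ldots,a_m)$ is a polynomial automorphism of $\C^m$. Next, for the high-order coefficients $m+1 \le k \le 2m$, the constraint $i+j=k$ with $0\le i,j\le m$ forces $i,j\ge 1$, so $a_k = \sum_{i=k-m}^{m} b_ib_{k-i}$ involves only $b_1,\ldots,b_m$ and never $b_0$. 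Substituting $b_j = B_j(a_1,\ldots,a_m)$ yields $a_k = A_k(a_1,\ldots,a_m)$ for an explicit polynomial $A_k$. This gives the "only if" part at once.

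For the "if" part, given $a_1,\ldots,a_{2m}$ with $a_k = A_k(a_1,\ldots,a_m)$ for $m+1\le k\le 2m$, I would define $b_j := B_j(a_1,\ldots,a_j)$ and set $P = 1 + \sum_j b_j\lambda^j$. By construction the coefficients of $P^2$ match $a_0,\ldots,a_m$, and they match $a_{m+1},\ldots,a_{2m}$ precisely by the assumed relations, so the given polynomial equals $P^2$ and is ECO. The remaining point, weighted homogeneity, I would handle by the scaling $\lambda \mapsto s\lambda$, which sends $a_k \mapsto s^k a_k$ while preserving both $a_0=1$ and the property of being a square of a degree-$m$ polynomial. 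Hence the ECO locus is invariant under $(a_1,\ldots,a_{2m})\mapsto(sa_1,\ldots,s^{2m}a_{2m})$. Since $(a_1,\ldots,a_m)$ ranges over all of $\C^m$ by the automorphism above, $A_k$ is uniquely determined, and invariance forces $A_k(sa_1,\ldots,s^ma_m) = s^k A_k(a_1,\ldots,a_m)$, i.e. $A_k$ is weighted homogeneous of weighted degree $k$ with ${\rm wt}(a_i)=i$. (Equivalently, an induction shows each $b_k$ is weighted homogeneous of weighted degree $k$, whence each summand $b_ib_{k-i}$ of $A_k$ has weighted degree $k$.)

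I expect no genuine obstacle. The computation is elementary; the only care needed is the bookkeeping that makes the "if" direction reproduce the \emph{full} square, together with the observation that the high-order convolutions avoid $b_0$ entirely, which is exactly what confines the defining relations to the first $m$ coefficients.
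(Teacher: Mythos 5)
Your proof is correct and follows essentially the same route as the paper's: you normalize the square root to have constant term $1$, solve the resulting triangular system recursively (your $B_k$ are the paper's $G_k$), and substitute into the convolution formulas for the coefficients $a_{m+1},\ldots,a_{2m}$, with the converse verified by reconstructing the square root. The only cosmetic difference is your scaling argument $\lambda \mapsto s\lambda$ for weighted homogeneity, which is a clean alternative to the paper's inductive bookkeeping (an argument you also note in passing).
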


\begin{remark}  Our proof below gives a recursive formula for
$A_k$, but an explicit expression of the polynomials $A_k$ will
not be needed in this paper.
\end{remark}

\begin{proof}
Suppose that $$a_{2m} \lambda^{2m} + a_{2m-1} \lambda^{2m-1} +
\cdots + a_1 \lambda +1$$ is an ECO polynomial. We can find
$(\sigma_1, \ldots, \sigma_m) \in \C^m$ such that $$a_{2m}
\lambda^{2m} + a_{2m-1} \lambda^{2m-1} + \cdots + a_1 \lambda +1 =
(\sigma_m \lambda^m + \sigma_{m-1} \lambda^{m-1} + \cdots +
\sigma_1 \lambda+1)^2.$$ For convenience,  define $\sigma_0=1,
\sigma_{m+1} = \cdots = \sigma_{2m}=0,$ so that we can write, for
each $k, 1 \leq k \leq 2m$,
$$a_k = \sum_{i=0}^k \sigma_i \sigma_{k-i}. $$
Using
$${a}_k=\sum_{i=0}^{k}\sigma_i\sigma_{k-i}=\sum_{i=1}^{k-1}\sigma_i\sigma_{k-i}+2\sigma_k,$$
we have
$$\sigma_k=\frac{{a}_k-\sum_{i=1}^{k-1}\sigma_i\sigma_{k-i}}{2} $$  for
$k=1,2,...,m$. Thus
\begin{align*}
 \sigma_1=\frac{{a}_1}{2}, \
 \sigma_2=\frac{{a}_2-\sigma_1^2}{2}=\frac{{a}_2}{2}-\frac{{a}_1^2}{8}, \ \cdots.
\end{align*}
Using induction on $k$, we see that $$ \sigma_k = G_k(a_1, \ldots,
a_m) \mbox{ for each } k, 1 \leq k \leq m,$$ where $G_k(t_1,
\ldots, t_m)$ is a weighted homogeneous polynomial of weighted
degree $k$.
 Setting
$G_0=1, G_{m+1} = \cdots = G_{2m} =0$, we see that
$$a_{k} = \sum_{\ell=0}^{k} G_{\ell}(a_1, \ldots, a_m) G_{k-\ell}(a_1,
\ldots, a_m)$$ for all $m+1\leq k \leq 2m$. Define
$$A_{k}(t_1, \ldots, t_m):= \sum_{\ell=0}^{k} G_{\ell}(t_1, \ldots,
t_m) G_{k-\ell}(t_1, \ldots, t_m).$$ Then $A_k$ is a weighted
homogeneous polynomial of weighted degree $k$ such that $a_k=
A_k(a_1, \ldots, a_m)$ for each $m+1 \leq k \leq 2m$.

Conversely, given any $(a_1, \ldots, a_m) \in \C^m$, let $$
a_{m+i} = A_{m+i}(a_1, \ldots,a_m) \mbox{ for each } 1 \leq i \leq
m$$ where $A_{m+i}$ is defined above.  Then for $\sigma_i =
G_j(a_1, \ldots, a_m)$, we see that
$$(\sigma_m \lambda^m + \cdots + \sigma_1 \lambda + 1)^2 = a_{2m} \lambda^{2m} +
a_{2m-1} \lambda^{2m-1} + \cdots + a_1 \lambda + 1$$ and
$$a_{2m}\lambda^{2m} + a_{2m-1} \lambda^{2m-1} + \cdots + a_1 \lambda
+1$$ is an ECO polynomial.
\end{proof}

\begin{corollary}\label{c.ecopoly}
Regard the affine space $$\mathbb A^{2m}:= \{(a_{2m}, a_{2m-1},
\ldots, a_1)\ | \ a_i \in \C \}$$ as the set of polynomials
$$ a_{2m} \lambda^{2m} + a_{2m-1} \lambda^{2m-1} + \cdots + a_1 \lambda + 1$$ of
degree $2m$ with the constant term 1. Then the set $\sD \subset
\mathbb A^{2m}$ of ECO-polynomials is a smooth complete
intersection of $m$ divisors $D_1, \ldots, D_m$ where $D_j$ is the
smooth divisor defined by $a_{m+j} = A_{m+j}(a_1, \ldots, a_m)$
where $A_{m+j}$ is the weighted homogeneous polynomial of weighted
degree $m+j$ defined in Proposition \ref{p.ecopoly}.
\end{corollary}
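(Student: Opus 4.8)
The plan is to read everything off directly from Proposition \ref{p.ecopoly}, so that no essentially new computation is required. That proposition identifies the ECO condition on $a_{2m}\lambda^{2m} + \cdots + a_1\lambda + 1$ with the $m$ explicit equations $a_{m+j} = A_{m+j}(a_1, \ldots, a_m)$, $1 \le j \le m$. Hence the first step is simply to set, for each $j$,
$$F_j := a_{m+j} - A_{m+j}(a_1, \ldots, a_m) \in \C[a_1, \ldots, a_{2m}],$$
to let $D_j := \{F_j = 0\}$, and to observe that Proposition \ref{p.ecopoly} gives $\sD = D_1 \cap \cdots \cap D_m$ as a set. Since $A_{m+j}$ is the weighted homogeneous polynomial of weighted degree $m+j$ produced there, the weighted-degree assertion in the statement is already part of the input and requires no further argument.

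The second step is to check smoothness of each $D_j$ and of their intersection, and here the point is that the defining equations are triangular: each $A_{m+j}$ involves only the variables $a_1, \ldots, a_m$, while $F_j$ contains the variable $a_{m+j}$ linearly, with coefficient $1$. Concretely I would form the $m \times 2m$ Jacobian matrix $\left( \partial F_j / \partial a_i \right)$ and note that its $m \times m$ block in the columns indexed by $a_{m+1}, \ldots, a_{2m}$ equals the identity, because $\partial F_j / \partial a_{m+i} = \delta_{ij}$. Thus the Jacobian has rank $m$ at every point of $\mathbb{A}^{2m}$. In particular each $F_j$ alone has nonvanishing differential, so each $D_j$ is a smooth divisor, and the full system has maximal rank $m$ everywhere, so the common zero locus $\sD$ is smooth of pure codimension $m$; that is, $\sD$ is a smooth complete intersection of the $D_j$.

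Equivalently, and perhaps more transparently, I would exhibit $\sD$ as the graph of the polynomial map $\mathbb{A}^m \to \mathbb{A}^m$ sending $(a_1, \ldots, a_m)$ to $(A_{m+1}(a_1,\ldots,a_m), \ldots, A_{2m}(a_1, \ldots, a_m))$. The projection $(a_{2m}, \ldots, a_1) \mapsto (a_1, \ldots, a_m)$ then restricts to an isomorphism $\sD \cong \mathbb{A}^m$, which displays $\sD$ as smooth and irreducible of dimension $m$ without any Jacobian computation at all; this is a convenient way to record irreducibility alongside smoothness.

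There is essentially no obstacle here: all of the analytic content sits in Proposition \ref{p.ecopoly}, and the remaining verification is the one-line linear-algebra observation that the Jacobian of $(F_1, \ldots, F_m)$ contains an identity block. The only thing to watch is the bookkeeping between the two indexing conventions — the coordinates are listed as $(a_{2m}, \ldots, a_1)$ while the divisors are indexed by $j = 1, \ldots, m$ via $a_{m+j}$ — so I would state explicitly that each $A_{m+j}$ depends only on $a_1, \ldots, a_m$, which is exactly what makes the triangular structure, and hence the transversality, immediate.
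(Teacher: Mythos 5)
Your proposal is correct and matches the paper's (implicit) reasoning: the paper states this corollary without proof, treating it as immediate from Proposition \ref{p.ecopoly}, and the content you supply --- that each $A_{m+j}$ depends only on $a_1,\ldots,a_m$, so the equations $a_{m+j}=A_{m+j}(a_1,\ldots,a_m)$ exhibit $\sD$ as a graph over $\mathbb{A}^m$, making smoothness and the complete-intersection property automatic --- is exactly the verification the paper leaves to the reader. The Jacobian identity-block observation and the graph isomorphism $\sD\cong\mathbb{A}^m$ are both fine ways to record this.
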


Using Corollary \ref{c.ecopoly}, we will study the space of ECO
lines defined in Definition \ref{d.ecoline}. For our computation, we introduce the following notation.

\begin{notation}\label{n.coordi} Choose a homogeneous coordinate system $t_0$,...,$t_n$ on
$\mathbb P^n$. Let $\BP^{n-1}_{\infty} \subset \BP^n$ be the
hyperplane defined by $t_0=0$. The restriction of $t_1, \ldots,
t_n$ on $\BP^{n-1}_{\infty}$ will be denoted by $z_1, \ldots,
z_n$. They provide a homogeneous coordinate system on
$\BP^{n-1}_{\infty}.$ Define the projective isomorphism
$\upsilon_y: \BP^{n-1}_{\infty} \to \BP T_y(\BP^n)$  at each point
$y=[1:y_1: \cdots : y_n] \in \BP^n \setminus \BP^{n-1}_{\infty}$
by sending $[z_1:\cdots:z_n]\in \BP^{n-1}_{\infty}$ to the tangent
direction of the line $$\{(y_1 + \lambda z_1, \ldots, y_n +
\lambda z_n)\ | \  \lambda \in \C\}$$ at the point $y$. The
collection $\{ \upsilon_y^{-1} \ | \ y \in \BP^n \setminus
\BP^{n-1}_{\infty}\}$ determines a canonical trivialization of the
projectivized tangent bundle
$$\upsilon^{-1}: \BP T(\BP^n \setminus \BP^{n-1}_{\infty}) \cong (\BP^n \setminus \BP^{n-1}_{\infty})
\times \BP^{n-1}_{\infty}.$$ \end{notation}

\begin{definition}\label{d.f}
For a homogeneous polynomial $f(t_0, \ldots, t_n)$ of degree $2m$,
$2 \leq m \leq n-1$ and for each integer $k, 0 \leq k \leq 2m,$
define $a^f_k(y;z) = a^f_k(y_1,\ldots,y_n;z_1, \ldots, z_n)$ to be
the polynomial in $2n$ variables satisfying
$$f(1, y_1 + \lambda z_1, \ldots, y_n + \lambda z_n) = a^f_0(y;z)
+ a^f_1(y;z) \lambda + \cdots + a^f_{2m}(y;z) \lambda^{2m}.$$ Note
that for a fixed $y$, $a^f_k(y;z)$ is a homogeneous polynomial in
$z$ of degree $k$. In particular, $a^f_0(y;z) = f(1,y_1, \ldots,
y_n)$ is independent of $z$.
\end{definition}

\begin{proposition}\label{p.maineco}
In Notation \ref{n.coordi} and Definition \ref{d.f}, let $Y
\subset \BP^n$ be the hypersurface defined by
$f(t_0,\ldots,t_n)=0$. For any point  $y \in \BP^n \setminus (Y
\cup \BP^{n-1}_{\infty}),$ the variety $\upsilon_y^{-1}(\sE^Y_y)
\subset \BP^{n-1}_{\infty}$ is (set-theoretically) the common zero
set of the homogeneous polynomials in $z$, $B^f_{k}(y;z),m+1 \leq
k \leq 2m$, defined by
$$B^f_k(y;z)= B^f_k(y_1,...,y_n;z_1,...,z_n):=\frac{a^f_k(y;z)}{a^f_0(y;z)}-A_k
\left (
\frac{a^f_1(y;z)}{a^f_0(y;z)},...,\frac{a^f_m(y;z)}{a^f_0(y;z)}\right
),$$  where $A_k$ is as in Proposition \ref{p.ecopoly}. Note that
$B^f_k(y;z)$ is homogeneous  in $z$ of degree $k$ because
$a^f_k(y;z)$ is homogeneous in $z$ of degree $k$    and $A_k$ is
weighted homogeneous of weighted degree $k$. In particular, if
$\sE^Y_y$ is of pure dimension $n-m-1$, then it is
set-theoretically a complete intersection of multi-degree $(m+1,
m+2, \ldots, 2m)$. \end{proposition}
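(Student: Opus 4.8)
The plan is to reduce the \emph{global} ECO condition on a line through $y$ to an \emph{algebraic} condition on the single-variable polynomial obtained by restricting $f$ to that line, and then to recognize the latter condition as exactly the vanishing of the $B^f_k$ via the recursion in the proof of Proposition \ref{p.ecopoly}. Fix $y \in \BP^n \setminus (Y \cup \BP^{n-1}_{\infty})$ and a direction $[z] = [z_1 : \cdots : z_n] \in \BP^{n-1}_{\infty}$, and let $\ell = \ell_{y,[z]}$ be the corresponding line, so that $\upsilon_y([z])$ is its tangent direction at $y$. Writing $g(\lambda) := f(1, y_1 + \lambda z_1, \ldots, y_n + \lambda z_n) = \sum_{k=0}^{2m} a^f_k(y;z)\, \lambda^k$ and homogenizing to $F(\mu,\lambda) := f(\mu, \mu y_1 + \lambda z_1, \ldots, \mu y_n + \lambda z_n)$, the intersection scheme $\ell \cap Y$ is cut out on $\ell \cong \BP^1$ by $F$: an affine point $[1:\lambda_0]$ occurs with the multiplicity of $\lambda_0$ as a root of $g$, while the point at infinity $p_\infty = [0:z] \in \BP^{n-1}_{\infty}$ occurs with multiplicity $2m - \deg g$. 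Since $y \notin Y$ we have $a^f_0(y;z) = f(1,y) \neq 0$, so $g \not\equiv 0$ and $\ell \not\subset Y$ automatically, and the $B^f_k$ are well defined.

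Next I would translate ``ECO'' into a perfect-square condition. By Definition \ref{d.ecoline}, $\ell$ is ECO iff every local intersection number of $\ell$ with $Y$ is even, that is, iff every affine root of $g$ has even multiplicity \emph{and} the multiplicity $2m - \deg g$ at $p_\infty$ is even. I claim this is equivalent to
$$\frac{g(\lambda)}{a^f_0(y;z)} = q(\lambda)^2 \quad \text{for some } q(\lambda) = 1 + \sigma_1 \lambda + \cdots + \sigma_m \lambda^m.$$
Indeed, if $\ell$ is ECO then $\deg g$ is even, say $\deg g = 2d \le 2m$, and $g = a^f_{2d}\prod_i(\lambda - r_i)^{2e_i}$ with all $r_i \ne 0$ (as $g(0)=a^f_0\neq 0$); normalizing by $a^f_0 = g(0)$ yields a square $q^2$ with $q(0) = 1$ and $\deg q = d \le m$. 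Conversely a square $q^2$ with $q(0)=1$ and $\deg q \le m$ has all roots of even multiplicity, and $\deg(q^2)=2\deg q$ is even with $2m - \deg(q^2)$ even, so $\ell$ is ECO. The point requiring care here is the contribution of $p_\infty$: when $a^f_{2m}(y;z) = f(0,z) = 0$ the point at infinity lies on $Y$, and one must check that the degree-$\le m$ square condition still records that its intersection multiplicity is even. This is exactly why the bound $\deg q \le m$ (rather than $=m$) is the correct one, and is the main subtlety of the argument.

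It then remains to identify the perfect-square condition with the equations $B^f_k(y;z) = 0$, $m+1 \le k \le 2m$. Setting $\bar a_k := a^f_k(y;z)/a^f_0(y;z)$, a polynomial $q = \sum_{k=0}^m \sigma_k \lambda^k$ with $\sigma_0 = 1$ satisfies $q^2 = \sum_k \bar a_k \lambda^k$ iff $\bar a_k = \sum_{i+j=k} \sigma_i \sigma_j$ for all $k$. For $1 \le k \le m$ these equations determine the $\sigma_k$ recursively, forcing $\sigma_k = G_k(\bar a_1, \ldots, \bar a_m)$ with the weighted homogeneous $G_k$ from the proof of Proposition \ref{p.ecopoly}; for $m+1 \le k \le 2m$ they read $\bar a_k = A_k(\bar a_1, \ldots, \bar a_m)$, i.e. $B^f_k(y;z) = 0$; and for $k > 2m$ they hold automatically since $\sigma_i = 0$ for $i > m$. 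Hence $\ell$ is ECO iff $B^f_k(y;z) = 0$ for all $m+1 \le k \le 2m$, which is precisely the asserted set-theoretic description of $\upsilon_y^{-1}(\sE^Y_y)$. The homogeneity of $B^f_k$ in $z$ of degree $k$ follows from $a^f_k$ being homogeneous of degree $k$ and $A_k$ being weighted homogeneous of weighted degree $k$, as already noted in the statement.

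Finally, the complete-intersection claim is a dimension count: $\upsilon_y^{-1}(\sE^Y_y)$ is cut out set-theoretically in $\BP^{n-1}_{\infty} \cong \BP^{n-1}$ by the $m$ homogeneous polynomials $B^f_{m+1}, \ldots, B^f_{2m}$ of degrees $m+1, \ldots, 2m$. If $\sE^Y_y$ has pure dimension $n-m-1$, then its codimension in $\BP^{n-1}$ equals $m$, the number of defining equations, so it is a set-theoretic complete intersection of multi-degree $(m+1, \ldots, 2m)$.
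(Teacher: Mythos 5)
Your proof is correct and follows essentially the same route as the paper's: restrict $f$ to the line through $y$ in the direction $[z]$, identify the ECO-line condition with the perfect-square condition on the resulting polynomial in $\lambda$, and recognize that condition as the vanishing of $B^f_k(y;z)$ for $m+1 \leq k \leq 2m$ via the recursion of Proposition \ref{p.ecopoly} (Corollary \ref{c.ecopoly}). The only difference is one of detail: you explicitly handle the intersection point on $\ell \cap \BP^{n-1}_{\infty}$ (hence the bound $\deg q \leq m$ rather than $\deg q = m$) and re-derive the recursion for the $\sigma_k$, whereas the paper compresses both steps into a direct citation of Corollary \ref{c.ecopoly}.
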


\begin{proof}
A point $[z_1:\cdots:z_n] \in \BP^{n-1}_{\infty}$ belongs to
$\upsilon_y^{-1}(\sE^Y_y)$ if and only if the polynomial in
$\lambda$ $$f(1, y_1 + \lambda z_1, \ldots, y_n + \lambda z_n) =
a^f_0(y;z) + a^f_1(y;z) \lambda + \cdots + a^f_{2m}(y;z)
\lambda^{2m}$$ is an ECO polynomial. By Corollary \ref{c.ecopoly},
we see that $\upsilon_y^{-1}(\sE^Y_y)$ is the common zero set of
$B^f_k(y;z), m+1 \leq k \leq 2m$. \end{proof}

\begin{proposition}\label{p.converse}  Given
a general smooth complete intersection $Z \subset \BP^{n-1}$ of
multi-degree $(m+1, \ldots, 2m),$ there exist a smooth
hypersurface $Y \subset \BP^n$ of degree $2m$ and a point $y \in
\BP^n \setminus Y$, such that $\mathcal E_y^Y \subset \mathbb P
T_y(\BP^n)$ is projectively equivalent to $Z \subset \BP^{n-1}$.
In particular, for a general hypersurface $Y \subset \BP^n$ of
degree $2m$ and a general $y\in \BP^n \setminus Y$, the variety of
ECO lines $\mathcal E^Y_y\subset \mathbb P T_y(\BP^n)$ is a smooth
complete intersection of degree $(m+1,...,2m)$.
\end{proposition}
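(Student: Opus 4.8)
The plan is to realize the given complete intersection as the ECO variety $\sE^Y_y$ at one specially chosen point $y$, and then to obtain the generic ``in particular'' statement by an openness argument. Take $y = o := [1:0:\cdots:0] \in \BP^n \setminus \BP^{n-1}_{\infty}$. For a degree-$2m$ form $f(t_0,\ldots,t_n)$, write $f = \sum_{j=0}^{2m} f_j(t_1,\ldots,t_n)\,t_0^{2m-j}$ with $f_j$ homogeneous of degree $j$. Substituting $t_0 = 1$ and $t_i = \lambda z_i$ into Definition \ref{d.f} gives $a^f_k(o;z) = f_k(z)$ for every $k$. Normalizing $f_0 = f(1,0,\ldots,0) = 1$ (so that $o \notin Y$), Proposition \ref{p.maineco} shows that $\upsilon_o^{-1}(\sE^Y_o) \subset \BP^{n-1}_{\infty}$ is the common zero locus of
$$ B^f_k(o;z) = f_k(z) - A_k\bigl(f_1(z),\ldots,f_m(z)\bigr), \qquad m+1 \le k \le 2m. $$

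The decisive observation is that this correspondence is triangular: the correction term $A_k(f_1,\ldots,f_m)$ depends only on the low-degree parts $f_1,\ldots,f_m$, while the top parts $f_{m+1},\ldots,f_{2m}$ enter the formulas linearly. I am therefore free to prescribe the forms $B^f_k$ at will. Given the general complete intersection $Z$ with defining equations $g_{m+1},\ldots,g_{2m}$ of degrees $m+1,\ldots,2m$, I set $f_1 = \cdots = f_m = 0$ and $f_k = g_k$ for $k > m$, so that
$$ f = t_0^{2m} + \sum_{k=m+1}^{2m} g_k(t_1,\ldots,t_n)\,t_0^{2m-k}. $$
Each $A_k$ is weighted homogeneous of positive weighted degree (Proposition \ref{p.ecopoly}), hence $A_k(0,\ldots,0) = 0$; thus $B^f_k(o;z) = g_k(z)$ and $\upsilon_o^{-1}(\sE^Y_o) = Z$ as sets. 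Since $Z$ is smooth and reduced, the projective isomorphism $\upsilon_o$ identifies $\sE^Y_o \subset \BP T_o(\BP^n)$ with $Z \subset \BP^{n-1}$.

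The step I expect to require the most care is checking that $Y = \{f = 0\}$ is smooth for general $g_k$; this is exactly what ties the construction to the hypothesis that $Z$ be general. I would argue chart by chart. On $\{t_0 \ne 0\}$ the affine equation is $1 + \sum_{k>m} g_k = 0$, which is smooth for general $g_k$ by Bertini. Along $\BP^{n-1}_{\infty} = \{t_0 = 0\}$ only the extreme terms survive in the relevant derivatives: $f|_{t_0=0} = g_{2m}$, $\partial f/\partial t_0|_{t_0=0} = g_{2m-1}$, and $\partial f/\partial t_i|_{t_0=0} = \partial g_{2m}/\partial t_i$. Hence a singular point of $Y$ on $\BP^{n-1}_{\infty}$ would force $g_{2m} = 0$ together with $\nabla g_{2m} = 0$, which cannot happen once $\{g_{2m}=0\}$ is a smooth hypersurface, i.e. for general $g_{2m}$. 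As smoothness of $Y$ is an open, generic condition on $(g_k)$ and every complete intersection of this multi-degree arises from such a tuple, the general $Z$ is indeed realized by a smooth $Y$, which proves the first assertion.

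The final ``in particular'' clause then follows formally by openness. Proposition \ref{p.maineco} guarantees that $\upsilon_y^{-1}(\sE^Y_y)$ is always cut out by forms $B^f_k$ of degrees $m+1,\ldots,2m$, and the requirement that these define a smooth complete intersection of the expected dimension $n-m-1$ (cf. Proposition \ref{p.isom}) is an open condition on the pair $(Y,y)$. Since the pair $(Y,o)$ constructed above satisfies it, the condition holds on a dense open subset of pairs, and therefore for a general hypersurface $Y \subset \BP^n$ of degree $2m$ and a general point $y \in \BP^n \setminus Y$.
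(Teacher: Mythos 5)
Your proof is correct and follows essentially the same route as the paper: the same choice of point $[1:0:\cdots:0]$, the same polynomial $f = t_0^{2m} + \sum_{k=m+1}^{2m} g_k t_0^{2m-k}$, the same use of Proposition \ref{p.maineco} together with $A_k(0,\ldots,0)=0$, and the same two-chart smoothness check (the paper packages the chart conditions as generality assumptions (i) and (ii) on the defining forms, which your Bertini and Euler-formula computations justify). Your explicit openness argument for the ``in particular'' clause is a reasonable fleshing-out of what the paper leaves implicit.
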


\begin{proof}
Denote by $\{b_{k}(z_1, \ldots, z_n)\ | \  m+1 \leq k \leq 2m\}$
homogeneous polynomials with $\deg b_k = k$ defining $Z$. By the
generality of $Z$, we may assume that \begin{itemize} \item[(i)]
the affine hypersurface
$$1 + b_{m+1}(t_1, \ldots, t_n) + \cdots + b_{2m}(t_1, \ldots, t_n)
=0$$ in $\C^n = \{ (t_1, \ldots, t_n), t_i \in \C \}$ is smooth
and \item[(ii)] the projective hypersurface $$b_{2m}(z_1, \ldots,
z_n) =0$$ in $\BP^{n-1}$ with homogeneous coordinates $(z_1:z_2:
\cdots : z_n)$ is smooth.
\end{itemize}  Let $Y \subset \BP^n$ be the hypersurface of degree
$2m$ defined by the polynomial
$$f(t_0, t_1, \ldots, t_n) := t_0^{2m} + t_0^{m-1} b_{m+1}(t_1, \ldots, t_n) +
\cdots + t_0 b_{2m-1}(t_1, \ldots, t_n) + b_{2m}(t_1, \ldots,
t_n).$$ Then $Y$ is a smooth hypersurface because it has no
singular point on its intersection with  the hyperplane $t_0 =0$
by the assumption (ii), while it has no singular point on the
affine space $t_0 \neq 0$ by the assumption (i). In Notation
\ref{n.coordi}, consider the point $y=[1:0:0:\cdots:0] \in \BP^n
\setminus (\BP^{n-1}_{\infty}\cup Y)$. Then
$$f(1, y_1 + \lambda z_1, \ldots, y_n +\lambda z_n) = f(1, \lambda
z_1, \ldots, \lambda z_n) = 1+ \lambda^{m+1} b_{m+1}(z) + \cdots +
\lambda^{2m} b_{2m}(z).$$ Comparing with Definition \ref{d.f}, we
obtain
$$a^f_0(y;z)  =1, \; a_1^f(y;z)= \cdots =
a^f_m(y;z) =0, \; a^f_k(y;z) = b_k(z) \mbox{ for } m+1 \leq k \leq
2m.$$ In the notation of Proposition \ref{p.maineco},
$$B^f_k(y; z_1, \ldots, z_n) = a^f_k(z_1, \ldots, z_n) = b_k(z_1, \ldots,
z_n)$$ for $m+1 \leq k \leq 2m.$ This implies that $\sE^Y_y, y:=
[1:0:\cdots:0]$, is projectively equivalent to $Z \subset
\BP^{n-1}$.
\end{proof}

\begin{proof}[Proof of Theorem \ref{t.VMRT} and Theorem \ref{t.converse}]

Theorem \ref{t.converse} is a direct consequence of Proposition
\ref{p.isom} and Proposition \ref{p.converse}.

To prove Theorem \ref{t.VMRT}, it suffices to show by Proposition
\ref{p.isom} that for any smooth $Y \subset \BP^n$ and a general
point $x \in \BP^n \setminus Y$, the subvariety $\sE^Y_x \subset
\BP T_x(\BP^n)$ is a smooth complete intersection of multi-degree
$(m+1, \ldots, 2m)$. Proposition \ref{p.converse} says that this
is O.K. if $Y$ is a general hypersurface.

To check it for any smooth $Y \subset \BP^n$, choose a deformation
$\{ Y_t \ | \; |t|< \epsilon \}$ of $Y=Y_0$ such that for a
Zariski open subset $U_t \subset \BP^n \setminus Y_t$,
$\sE^{Y_t}_x \subset \BP T_{x}(\BP^n)$ is a smooth complete
intersection for any $t \neq 0$ and any $x \in U_t$. By shrinking
$\epsilon$ if necessary, the intersection $\cap_{t\neq 0} U_t$ is
nonempty.  By Proposition \ref{p.isom}, we have a Zariski open
subset $U \subset \BP^n \setminus Y$ such that $\sE^Y_x$ is smooth
for any $x \in U$. Pick a point $x \in  (\cap_{t \neq 0} U_t) \cap
U $.

We can construct a smooth family $\{\phi_t:X_t \to \BP^n\ | \; |t|
< \epsilon\}$ of  double covers of $\BP^n$ branched along $Y_t$'s.
Choose $z_t \in  \phi_t^{-1}(x)$ in a continuous way.  The proof
(e.g. II.3.11.5 in \cite{Ko}) of the smoothness of $\sK_x$
mentioned in Definition \ref{d.mrc} works for the family
$\sK_{z_t}$, i.e., the family $\{ \sK_{z_t}\ | \; |t| <\epsilon\}$
is a flat family of nonsingular projective subvarieties. Via
Proposition \ref{p.isom}, this implies that $\{ \sE^{Y_t}_x\ | \;
|t|<\epsilon\}$ is a flat family of nonsingular projective
varieties in $\BP T_x(\BP^n)$.
 By our
choice of $x$, $\sE^{Y_t}_x$ is (scheme-theoretically) a smooth
complete intersection for $t \neq 0$, while $\sE^{Y_0}_x$ is a
nonsingular variety which is set-theoretically a complete
intersection of the same multi-degree as $\sE^{Y_t}_x, t\neq 0,$
by Proposition \ref{p.maineco}.  We conclude that $\sE^{Y_0}_x$ is
also a smooth complete intersection of multi-degree $(m+1, \ldots,
2m)$.
\end{proof}

\section{Variation of VMRT}

\begin{notation}\label{n.V} Let $V_k$ be the vector space of homogeneous polynomials of
degree $k$ in $z_1$,...,$z_n$. Each polynomial $h\in V_k$ is of
the form
$$h(z_1,...,z_n)=\sum_{i_1+\cdots+i_n=k}e_{i_1,...,i_n}z_1^{i_1}\cdots
z_n^{i_n}.$$ Regarding $V_k$ as a complex manifold, take
$\{e_{i_1,..,i_n}\}_{i_1+\cdots+i_n=k}$ as linear coordinates on
$V_k$ and $$\left\{ \frac{\partial}{\partial e_{i_1,...,i_n}} \
\Bigr| \ i_1+\cdots+i_n=k \right\}$$ as a basis for the tangent
spaces $T_{h}(V_k)$ of $V_k$ at each $h\in V_k$. There is a
canonical isomorphism between $V_k$ and $T_{h}(V_k)$ identifying a
polynomial
$$\sum_{i_1+\cdots+i_n=k}E_{i_1,...,i_n}z_1^{i_1}\cdots z_n^{i_n}\in
V_k$$ with the tangent vector
$$\sum_{i_1+\cdots+i_n=k}E_{i_1,...,i_n}\frac{\partial}{\partial
e_{i_1,...,i_n}}\in T_h(V_k).$$ \end{notation}

\begin{notation}\label{n.f_k}
For a homogeneous polynomial $f(t_0, \ldots, t_n)$ of degree $2m,
2 \leq m \leq n-1$,  write
$$f(t_0,...,t_n)=t_0^{2m}f_0(t_1,...,t_n)+t_0^{2m-1}f_1(t_1,...,t_n)+\cdots+f_{2m}(t_1,...,t_n)$$
where $f_k(t_1,...,t_n)$ is a homogeneous polynomial of degree
$k=0,...,2m$ in $t_1$,...,$t_n$. Comparing with Definition
\ref{d.f}, we have $$f(1,0,...,0)=f_0(z_1,...,z_n) = a^f_0(0;z)
\mbox{ and } a^f_i(0;z)=f_i(z) \mbox{ for } i=1,..., 2m.$$
\end{notation}

\begin{definition}\label{d.phi}
For a homogeneous polynomial $f(t_0, \ldots, t_n)$ of degree $2m,
2 \leq m \leq n-1$, let $Y \subset \BP^n$ be the hypersurface
defined by $f(t_0,\ldots,t_n)=0$ and define  a morphism $$\mu:
\BP^n \setminus (\BP^{n-1}_{\infty}\cup Y) \to V_{m+1}$$ by
sending $y=[1:y_1: \cdots :y_n]$  to the polynomial in $z$
$$\mu(y):= [B^f_{m+1}(y;z)] \in V_{m+1}$$ with $B^f_{m+1}(y;z)$ as
in Proposition \ref{p.maineco}.
\end{definition}

\begin{proposition}\label{p.dphi}
In Notation \ref{n.f_k} and  Definition \ref{d.phi},
 assume that
$$f(1,0,...,0)=f_0(z)=1 \mbox{ and } f_1(z)=\cdots=f_m(z)=0.$$  Then for
$x=[1:0:\cdots:0] \in \BP^n \setminus (\BP^{n-1}_{\infty}\cup Y)$
and $\sum_{i=1}^n v_i \frac{\partial}{\partial y_i} \in
T_x(\BP^n)$,
$$d \mu_x(\sum_{i=1}^n v_i \frac{\partial}{\partial y_i}) =
[\sum_{i=1}^n v_i \frac{\partial f_{m+2}}{\partial t_i}(z)]  \in
T_{\mu(x)}(V_{m+1}) = V_{m+1}.$$ \end{proposition}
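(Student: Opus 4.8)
The plan is to exploit that the target $V_{m+1}$ is a vector space: under the canonical identification $T_{\mu(x)}(V_{m+1}) = V_{m+1}$ from Notation \ref{n.V}, the differential $d\mu_x$ is nothing but the coefficientwise directional derivative of $y \mapsto B^f_{m+1}(y;z)$ at $y=0$. Thus $d\mu_x(\sum_{i=1}^n v_i \frac{\partial}{\partial y_i}) = [\sum_{i=1}^n v_i\, \partial_{y_i} B^f_{m+1}(y;z)|_{y=0}]$, and the whole statement reduces to computing $\partial_{y_i} B^f_{m+1}(y;z)|_{y=0}$ for each $i$. To set up the bookkeeping I would first record the $\lambda$-expansion behind Definition \ref{d.f}: since $f(1, y+\lambda z) = \sum_{k=0}^{2m} f_k(y+\lambda z)$ and $f_k(y+\lambda z)$ has $\lambda$-degree $k$, the coefficient $a^f_p(y;z)$ is the sum over $k \geq p$ of the $\lambda^p$-coefficient of $f_k(y+\lambda z)$. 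Combined with the hypotheses $f_0=1$ and $f_1=\cdots=f_m=0$, this gives $a^f_0(0;z)=1$, $a^f_1(0;z)=\cdots=a^f_m(0;z)=0$ and $a^f_{m+1}(0;z)=f_{m+1}(z)$, consistent with Notation \ref{n.f_k}.

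The decisive simplification is the vanishing of the correction coming from $A_{m+1}$. Writing $B^f_{m+1} = a^f_{m+1}/a^f_0 - A_{m+1}(a^f_1/a^f_0, \ldots, a^f_m/a^f_0)$ and differentiating in $y_i$, the chain rule produces a term $\sum_{j=1}^m (\partial A_{m+1}/\partial t_j)(\cdots)\,\partial_{y_i}(a^f_j/a^f_0)$. From the recursion in the proof of Proposition \ref{p.ecopoly} one has $A_{m+1} = \sum_{\ell=1}^m G_\ell\, G_{m+1-\ell}$, a sum of products of two weighted-homogeneous polynomials of positive weighted degree; hence $A_{m+1}$ vanishes to order $\geq 2$ at the origin and all its first partials vanish there. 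Since $a^f_j(0;z)/a^f_0(0;z)=0$ for $1 \leq j \leq m$, this entire term drops out at $y=0$. Moreover $a^f_0(y;z) = 1 + f_{m+1}(y) + \cdots + f_{2m}(y)$ has vanishing first derivative at $y=0$ because every $f_k$ with $k \geq m+1$ is homogeneous of degree $\geq 3$; therefore $\partial_{y_i}(a^f_{m+1}/a^f_0)|_{y=0} = \partial_{y_i} a^f_{m+1}|_{y=0}$.

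It then remains to evaluate $\partial_{y_i} a^f_{m+1}(y;z)|_{y=0}$. Differentiating the $\lambda^{m+1}$-coefficient of $f_k(y+\lambda z)$ in $y_i$ and setting $y=0$ extracts the $\lambda^{m+1}$-coefficient of $(\partial_{t_i} f_k)(\lambda z) = \lambda^{k-1}(\partial_{t_i} f_k)(z)$, which is nonzero only when $k=m+2$. This yields $\partial_{y_i} a^f_{m+1}|_{y=0} = (\partial f_{m+2}/\partial t_i)(z)$, and summing against the $v_i$ gives the asserted formula $d\mu_x(\sum_i v_i \frac{\partial}{\partial y_i}) = [\sum_i v_i \frac{\partial f_{m+2}}{\partial t_i}(z)]$.

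The step I would be most careful about is precisely this tracking of which terms survive each $y$-differentiation at the origin. The disappearance of the $A_{m+1}$-correction rests entirely on $A_{m+1}$ having no linear part, which must be read off cleanly from the product structure $A_{m+1} = \sum_{\ell=1}^m G_\ell G_{m+1-\ell}$ established in Proposition \ref{p.ecopoly}; and the clean emergence of $f_{m+2}$ alone, rather than a combination of several $f_k$, is forced by the $\lambda$-degree count together with the vanishing $f_1 = \cdots = f_m = 0$. Everything else is a routine application of the chain rule and homogeneity, so I expect no further obstacle once these two vanishing facts are isolated.
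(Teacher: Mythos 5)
Your proof is correct and takes essentially the same route as the paper, which merely packages the computation as a separate lemma (Lemma \ref{l.Tim}): differentiate $B^f_{m+1}=a^f_{m+1}/a^f_0-A_{m+1}\left(a^f_1/a^f_0,\ldots,a^f_m/a^f_0\right)$ at $y=0$, observe that the chain-rule correction dies because $\nabla A_{m+1}$ vanishes at the origin and the arguments $a^f_j(0;z)/a^f_0(0;z)$ are zero, and identify $\partial_{y_i}a^f_{m+1}|_{y=0}$ with $\partial f_{m+2}/\partial t_i(z)$ by a $\lambda$-degree count. The only cosmetic difference is that you justify $\nabla A_{m+1}(0)=0$ from the product structure $A_{m+1}=\sum_{\ell=1}^{m}G_\ell G_{m+1-\ell}$, whereas the paper deduces it directly from weighted homogeneity (no linear monomial $x_j$, $j\leq m$, can have weighted degree $m+1$).
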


We will use the following lemma.

\begin{lemma}\label{l.Tim}
In Notation \ref{n.f_k}, set $f_{2m+1} =0$ for convenience. Assume
that  $$f(1,0,...,0)=f_0(z)=1.$$ Then
 $B^f_{k}(y;z)$ of Proposition \ref{p.maineco} satisfies
  \begin{eqnarray*} \frac{\partial
B^f_{k}(y;z)}{\partial y_i}\Bigr|_{(0;z)} &=&  \frac{\partial
f_{k+1}}{\partial t_i}(z)-f_k(z)\frac{\partial f_1 }{\partial
t_i}(z) \\ & & -\sum_{j=1}^{m}\frac{\partial A_{k}}{\partial
x_j}(f_1(z),...,f_m(z))\left( \frac{\partial f_{j+1}}{\partial
t_i}(z)-f_j(z)\frac{\partial f_1 }{\partial t_i}(z)\right)
\end{eqnarray*} for all $k=m+1,...,2m$ and $i=1,...,n$.
\end{lemma}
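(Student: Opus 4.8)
The plan is to reduce everything to a single differentiation of the generating identity defining the coefficients $a^f_k(y;z)$, and then to apply the quotient and chain rules. Write $\alpha_k(y;z) := a^f_k(y;z)/a^f_0(y;z)$ for $1 \le k \le 2m$, so that $B^f_k = \alpha_k - A_k(\alpha_1, \ldots, \alpha_m)$. The normalization $f_0(z) = 1$ gives $a^f_0(0;z) = 1$ and $a^f_k(0;z) = f_k(z)$, which makes every denominator equal to $1$ after evaluation at $y = 0$ and keeps the formulas clean.

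First I would compute $\partial a^f_k / \partial y_i$ at $y = 0$. Differentiating the defining identity
$$f(1, y_1 + \lambda z_1, \ldots, y_n + \lambda z_n) = \sum_{k=0}^{2m} a^f_k(y;z)\,\lambda^k$$
with respect to $y_i$ turns the left-hand side into $(\partial f/\partial t_i)(1, y_1+\lambda z_1, \ldots, y_n + \lambda z_n)$. Evaluating at $y = 0$ and using the decomposition $f = \sum_k t_0^{2m-k} f_k(t_1, \ldots, t_n)$ together with the homogeneity of $f_k$, the right-hand side becomes $\sum_{k} \lambda^{k-1}(\partial f_k/\partial t_i)(z)$. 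Comparing coefficients of $\lambda^k$ — the $\lambda^{-1}$ term vanishing precisely because $f_0 = 1$, and the top term being covered by the convention $f_{2m+1} = 0$ — yields the key shift
$$\frac{\partial a^f_k}{\partial y_i}(0;z) = \frac{\partial f_{k+1}}{\partial t_i}(z), \qquad 0 \le k \le 2m.$$
The case $k = 0$ gives $\partial a^f_0/\partial y_i(0;z) = \partial f_1/\partial t_i(z)$, which is exactly what makes the correction terms $f_k\,\partial f_1/\partial t_i$ appear.

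Next I would feed this into the quotient rule. Since $a^f_0(0;z) = 1$, we get
$$\frac{\partial \alpha_k}{\partial y_i}(0;z) = \frac{\partial a^f_k}{\partial y_i}(0;z) - a^f_k(0;z)\,\frac{\partial a^f_0}{\partial y_i}(0;z) = \frac{\partial f_{k+1}}{\partial t_i}(z) - f_k(z)\,\frac{\partial f_1}{\partial t_i}(z),$$
which accounts for the first two terms of the claimed formula. Finally, the chain rule applied to $A_k(\alpha_1, \ldots, \alpha_m)$, evaluated at $y = 0$ where $\alpha_j(0;z) = f_j(z)$, produces
$$\frac{\partial}{\partial y_i} A_k(\alpha_1, \ldots, \alpha_m)\Bigr|_{y=0} = \sum_{j=1}^m \frac{\partial A_k}{\partial x_j}(f_1(z), \ldots, f_m(z)) \left( \frac{\partial f_{j+1}}{\partial t_i}(z) - f_j(z)\,\frac{\partial f_1}{\partial t_i}(z)\right).$$
Subtracting this from the previous display, as dictated by $B^f_k = \alpha_k - A_k(\alpha_1, \ldots, \alpha_m)$, gives precisely the asserted identity.

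The computation is essentially routine; the only point requiring care — and what I regard as the main obstacle — is the bookkeeping at the two ends of the index range. One must verify that differentiation in $y_i$ commutes correctly with the expansion in $\lambda$, that the spurious $\lambda^{-1}$ coefficient vanishes because of the normalization $f_0 = 1$, and that the top coefficient is handled by the convention $f_{2m+1} = 0$. Once these boundary cases are checked, the quotient- and chain-rule steps are purely formal.
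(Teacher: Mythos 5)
Your proposal is correct and takes essentially the same route as the paper's proof: differentiate the defining expansion of $f(1,y_1+\lambda z_1,\ldots,y_n+\lambda z_n)$ with respect to $y_i$, compare coefficients of $\lambda$ to obtain the shift $\frac{\partial a^f_k}{\partial y_i}(0;z)=\frac{\partial f_{k+1}}{\partial t_i}(z)$, and then apply the quotient and chain rules to $B^f_k=\frac{a^f_k}{a^f_0}-A_k\bigl(\frac{a^f_1}{a^f_0},\ldots,\frac{a^f_m}{a^f_0}\bigr)$. The only slip is cosmetic: there is never a $\lambda^{-1}$ term, since $f_0$ is a constant (homogeneous of degree $0$) by Notation \ref{n.f_k}, so its vanishing is automatic rather than a consequence of the normalization $f_0=1$, which is needed only to make $a^f_0(0;z)=1$ in the quotient-rule step.
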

\begin{proof}
 In the equality $$ \frac{\partial f(t)}{\partial
t_i}\Bigr|_{t=(1,\lambda z_1,...,\lambda z_n)} =\frac{\partial
f(1,y_1+\lambda z_1,...,y_n+\lambda z_n)}{\partial
y_i}\Bigr|_{y_1=\cdots=y_n=0},$$ the left hand side can be
written,  via  Notation \ref{n.f_k}, $$\frac{\partial
f_1}{\partial t_i}(z_1,...,z_n)+ \frac{\partial f_2}{\partial
t_i}(z_1,...,z_n)\lambda+\cdots+\frac{\partial f_{2m+1}}{\partial
t_i}(z_1,...,z_n)\lambda^{2m}.$$ On the other hand, the right hand
side is, by Definition \ref{d.f}, $$\frac{\partial a^f_0}{\partial
y_i}(0;z)+\frac{\partial a^f_1}{\partial
y_i}(0;z)\lambda+\cdots+\frac{\partial a^f_{2m}}{\partial
y_i}(0;z)\lambda^{2m}.$$ Therefore for each $i=1, \ldots, n$,
$$\frac{\partial a^f_{2m}}{\partial y_i}(0;z) =0 \mbox{ and }
\frac{\partial a^f_k}{\partial y_i}(0;z)=\frac{\partial
f_{k+1}}{\partial t_i}(z)\text{ for } k=0,...,2m.$$
 From this and
the assumption that $a^f_0(0;z)=f(1,0,...,0)=1$, we obtain
\begin{eqnarray*} \frac{\partial}{\partial y_i}\left
(\frac{a^f_k(y;z)}{a^f_0(y;z)}\right )\Bigr|_{(y;z)=(0;z)}&=&
\frac{a^f_0(0;z)\frac{\partial a^f_k}{\partial
y_i}(0;z)-a^f_k(0;z)\frac{\partial a^f_0}{\partial
y_i}(0;z)}{a^f_0(0;z)^2} \\ &=& \frac{\partial f_{k+1}}{\partial
t_i}(z)-f_k(z)\frac{\partial f_1 }{\partial t_i}(z)
\end{eqnarray*} for all
$k=0,...,2m$ and $i=1,...,n$. Thus
\begin{eqnarray*}\frac{\partial B^f_{k}(y;z)}{\partial
y_i} \Bigr|_{(0;z)} &=& \frac{\partial}{\partial y_i}\left
(\frac{a^f_{k}(y;z)}{a^f_0(y;z)}\right )\Bigr|_{(0;z)}\\
& & -\sum_{j=1}^{m}\frac{\partial A_{k}}{\partial
x_j}\left(\frac{a^f_1(0;z)}{a^f_0(0;z)},...,\frac{a^f_m(0;z)}{a^f_0(0;z)}\right)\frac{\partial}{\partial
y_i}\left
(\frac{a^f_{j}(y;z)}{a^f_0(y;z)}\right )\Bigr|_{(0;z)}\\
&=& \frac{\partial f_{k+1}}{\partial t_i}(z)-f_k(z)\frac{\partial
f_1 }{\partial t_i}(z) \\ & & -\sum_{j=1}^{m}\frac{\partial
A_{k}}{\partial x_j}(f_1(z),...,f_m(z))\left(\frac{\partial
f_{j+1}}{\partial t_i}(z)-f_j(z)\frac{\partial f_1 }{\partial
t_i}(z)\right)\end{eqnarray*} for all $k=m+1,...,2m$ and
$i=1,...,n$.
\end{proof}

\begin{proof}[Proof of Proposition \ref{p.dphi}]
Since $A_k(x_1,...,x_m)$ is weighted homogeneous of weighted
degree $k$, if $k \geq m+1$, then the linear part of
$A_k(x_1,...,x_m)$ does not contain variables $x_1$,...,$x_m$.
Therefore for all $k=m+1,...,2m$ and $j=1,...,m,$
$$\frac{\partial A_k}{\partial
x_j}\Bigr|_{(0,...,0)}=0.$$  Thus putting $f_1= \cdots = f_m=0$ in
Lemma \ref{l.Tim}, we obtain
$$\frac{\partial B^f_{m+1}}{\partial y_i}(0;z)=\frac{\partial
f_{m+2}}{\partial t_i}(z).$$ It follows that
$$d
\mu_x(\sum_{i=1}^n v_i \frac{\partial}{\partial y_i})
=\sum_{i=1}^n v_i\frac{\partial B^f_{m+1}}{\partial y_i}(0;z) =
\sum_{i=1}^n v_i\frac{\partial f_{m+2}}{\partial t_i}(z).$$
\end{proof}

\begin{notation}\label{n.GL}
Denote  the action of $A \in GL(n,\mathbb C)$ on $\C^n$  by $(z_1,
\ldots, z_n) \mapsto A(z_1, \ldots, z_n).$  We have the natural
induced action on $V_k$ given by
$$(A. h)(z_1,...,z_n):=h(A^{-1}(z_1,...,z_n)),\ h\in V_k. $$
Denote the orbit of $h\in V_k$ by
$$GL(n,\mathbb C).h:=\{A.h\bigm|\ A\in GL(n,\mathbb C)\}\subset V_k.$$
\end{notation}

\begin{proposition}\label{p.Torbit}
We use the terminology of Notation \ref{n.V} and Notation
\ref{n.GL}. A tangent vector
$$\sum_{i_1+\cdots+i_n=k}E_{i_1,...,i_n} \frac{\partial}{\partial
e_{i_1,...,i_n}}  \in T_h(V_k)$$ is tangent to the orbit
$GL(n,\mathbb C).h$ if and only if there exists an $(n\times n)$
matrix $(s_{j}^i)_{i,j=1,...,n}$ such that
$$\sum_{i_1+\cdots+i_n=k}E_{i_1,...,i_n}z_1^{i_1}\cdots z_n^{i_n}=\frac{d}{dt} h(z_1+t\sum_{i=1}^n
s_1^iz_i,...,z_n+t\sum_{i=1}^n s_n^iz_i)\Bigr|_{t=0}.$$
\end{proposition}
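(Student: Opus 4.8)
The plan is to recognize the right-hand side of the asserted equivalence as the image of the infinitesimal action of the Lie algebra $\mathfrak{gl}(n,\C)$ and to invoke the standard description of the tangent space to a group orbit as that image. Consider the orbit map $\rho: GL(n,\C)\to V_k$ defined by $\rho(A)=A.h$ in the sense of Notation \ref{n.GL}. By the general theory of algebraic (or Lie) group actions, the orbit $GL(n,\C).h$ is an immersed submanifold of $V_k$, and its tangent space at the base point $h=\rho(I)$ is exactly the image of the differential $d\rho_I: T_I GL(n,\C)=\mathfrak{gl}(n,\C)\to T_h(V_k)$. Hence a tangent vector is tangent to the orbit precisely when it belongs to $\mathrm{Im}(d\rho_I)$, and the entire problem reduces to computing this image explicitly.

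Next I would evaluate $d\rho_I$ by the chain rule. Given a matrix $S=(s^i_j)\in\mathfrak{gl}(n,\C)$, take the curve $A(t)=I+tS$, so that $A(0)=I$ and $A'(0)=S$. Since $A(t)^{-1}=I-tS+O(t^2)$, the $j$-th coordinate of $A(t)^{-1}(z_1,\ldots,z_n)$ is $z_j-t\sum_{i=1}^n s^i_j z_i+O(t^2)$. Because the induced action is $(A.h)(z)=h(A^{-1}z)$, differentiating gives
$$d\rho_I(S)=\frac{d}{dt}\Big|_{t=0} h(A(t)^{-1}z)=-\sum_{j=1}^n\left(\sum_{i=1}^n s^i_j z_i\right)\frac{\partial h}{\partial z_j}(z).$$

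Finally I would reconcile the signs and the conventions. As $S$ ranges over all of $\mathfrak{gl}(n,\C)$, so does $-S$; replacing $S$ by $-S$ shows that $\mathrm{Im}(d\rho_I)$ consists of all polynomials $\sum_{j}\bigl(\sum_{i} s^i_j z_i\bigr)\partial h/\partial z_j$, which by the chain rule is exactly $\frac{d}{dt}\big|_{t=0} h(z_1+t\sum_i s^i_1 z_i,\ldots,z_n+t\sum_i s^i_n z_i)$. Under the canonical identification $V_k\cong T_h(V_k)$ of Notation \ref{n.V}, which matches $\sum E_{i_1,\ldots,i_n}z_1^{i_1}\cdots z_n^{i_n}$ with $\sum E_{i_1,\ldots,i_n}\frac{\partial}{\partial e_{i_1,\ldots,i_n}}$, this is precisely the stated criterion. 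The computation is routine; the only points demanding care are the sign introduced by passing to the inverse $A(t)^{-1}$, which is harmless because $S$ ranges over the whole Lie algebra, and the index bookkeeping in combining the chain rule with the canonical identification of $V_k$ with its tangent space, so I do not anticipate a genuine obstacle here.
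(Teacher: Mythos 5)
Your proposal is correct and follows essentially the same route as the paper: both identify the tangent space of the orbit at $h$ with the image of the differential at $I$ of the orbit map $A \mapsto A.h$, and compute that differential along the curves $I+tS$. The only difference is that you make explicit the sign coming from $(I+tS)^{-1}=I-tS+O(t^2)$ and absorb it by replacing $S$ with $-S$, a point the paper's proof passes over silently.
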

\begin{proof}
Define a morphism $$\alpha_h:GL(n,\mathbb C)\rightarrow V_k$$
sending $A$ to $A.h$. Then $GL(n,\mathbb C).h$ is the image of
$\alpha_h$ and $\alpha_h(I)=h$ where $I$ is the $(n\times n)$
identity matrix. The tangent space of $GL(n,\mathbb C).h$ at $h$
is the image of the differential
$$d(\alpha_{h})_{I}:T_{I}(GL(n,\mathbb C))\rightarrow T_h(V_k).$$
Let us identify $T_{I}(GL(n,\mathbb C))$ with the vector space
$M_n$ of all $(n\times n)$ matrices so that $A\in M_n$ corresponds
to the tangent vector at $I$ of the curve $c(t)=I+tA$ which is
indeed a curve on $GL(n,\mathbb C)$ for sufficiently small $t$.
Since $\alpha_h\circ c(t)$ is the polynomial $h((I+tA)^{-1}(z_1,
\ldots, z_n))$,  the differential $d(\alpha_h)_I$ sends
$A=\frac{d}{dt} c(t)\Bigr|_{t=0}$ to $\frac{d}{dt}
h((I+tA)^{-1}(z_1, \ldots, z_n))\Bigr|_{t=0}$ which is of the form
on the right hand side of the equation in the proposition.
\end{proof}

\begin{proposition}\label{p.ImOb}
In the setting of Proposition \ref{p.dphi},
$$ T_{\mu(x)}(GL(n,\mathbb
C).\mu(x))=\left\{\sum_{i,j=1}^n s_j^iz_i \frac{\partial
f_{m+1}}{\partial t_j}(z) \ \Bigr| \  s^i_j\in\mathbb
C\right\}\subset T_{\mu(x)}(V_{m+1}) = V_{m+1}.$$\end{proposition}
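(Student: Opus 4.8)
The plan is to first pin down the point $\mu(x) \in V_{m+1}$ explicitly under the normalization of Proposition \ref{p.dphi}, and then feed that polynomial directly into Proposition \ref{p.Torbit}, expanding the right-hand side by the chain rule.

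First I would compute $\mu(x)$. By Notation \ref{n.f_k} together with the hypotheses $f(1,0,\ldots,0)=f_0(z)=1$ and $f_1(z)=\cdots=f_m(z)=0$ assumed in Proposition \ref{p.dphi}, the values at $y=0$ are $a^f_0(0;z)=1$ and $a^f_i(0;z)=f_i(z)$ for all $i$. In particular the first $m$ entries $a^f_1(0;z)/a^f_0(0;z),\ldots,a^f_m(0;z)/a^f_0(0;z)$ feeding the term $A_{m+1}$ in the definition of $B^f_{m+1}$ (Proposition \ref{p.maineco}) all vanish at $x$. Since $A_{m+1}$ is weighted homogeneous of weighted degree $m+1\geq 1$, it has no constant term, so $A_{m+1}(0,\ldots,0)=0$. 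Hence
$$\mu(x) = B^f_{m+1}(x;z) = \frac{a^f_{m+1}(0;z)}{a^f_0(0;z)} = f_{m+1}(z).$$

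With $\mu(x)=f_{m+1}$ identified, the remaining step is a direct application of Proposition \ref{p.Torbit} with $h=f_{m+1}$: a tangent vector lies in $T_{\mu(x)}(GL(n,\C).\mu(x))$ if and only if it has the form
$$\frac{d}{dt}\, f_{m+1}\left(z_1 + t\sum_{i=1}^n s_1^i z_i,\ldots, z_n + t\sum_{i=1}^n s_n^i z_i\right)\Bigr|_{t=0}$$
for some matrix $(s^i_j)$. Applying the chain rule and noting that the $j$-th argument has $t$-derivative $\sum_{i=1}^n s_j^i z_i$ at $t=0$, while the partial derivative of $h=f_{m+1}$ in its $j$-th slot evaluates to $\frac{\partial f_{m+1}}{\partial t_j}(z)$, this equals
$$\sum_{j=1}^n \left(\sum_{i=1}^n s_j^i z_i\right)\frac{\partial f_{m+1}}{\partial t_j}(z) = \sum_{i,j=1}^n s_j^i z_i\,\frac{\partial f_{m+1}}{\partial t_j}(z),$$
which is precisely the description asserted in the statement.

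The computation is essentially routine, and the only point demanding care is the first step, the identification $\mu(x)=f_{m+1}$: one must observe that the subtraction term $A_{m+1}(\ldots)$ in $B^f_{m+1}$ drops out exactly because the normalization $f_1=\cdots=f_m=0$ makes all its arguments zero, while weighted homogeneity of positive weighted degree rules out a constant term. This is the same mechanism that cleans up Lemma \ref{l.Tim} and Proposition \ref{p.dphi} in the normalized situation. Once $\mu(x)$ is correctly located, the orbit tangent space follows immediately from Proposition \ref{p.Torbit} and a one-line chain-rule expansion, so I do not anticipate any genuine obstacle beyond bookkeeping.
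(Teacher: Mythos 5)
Your proposal is correct and follows essentially the same route as the paper's own proof: identify $B^f_{m+1}(0;z)=f_{m+1}(z)$ using $a^f_0(0;z)=1$, $a^f_i(0;z)=f_i(z)=0$ for $i=1,\ldots,m$, and the vanishing of $A_{m+1}$ at the origin, then apply Proposition \ref{p.Torbit} and the chain rule. The only difference is cosmetic (you simplify $\mu(x)$ before invoking Proposition \ref{p.Torbit}, the paper after), and your explicit remark that weighted homogeneity of positive degree forces $A_{m+1}(0,\ldots,0)=0$ makes precise a point the paper leaves implicit.
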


\begin{proof} From
$\mu(x)=[B_{m+1}(0;z)] \in V_{m+1}$ and Proposition
\ref{p.Torbit}, we get
\begin{equation*}\label{eq.1}
T_{\mu(x)}(GL(n,\mathbb C).\mu(x))=\left\{ \frac{d}{dt}
B^f_{m+1}(0; z_1+t\sum_{i=1}^n s_1^iz_i,...,z_n+t\sum_{i=1}^n
s_n^iz_i)\Bigr|_{t=0} \;  \Bigr| \ s^i_j\in\mathbb
C\right\}.\end{equation*} From Notation \ref{n.f_k},  we have
$a^f_0(0;z)=f(1,0,...,0)=1$ and $a^f_i(0;z)=f_i(z)=0$ for
$i=1,...,m$. Thus
$$B^f_{m+1}(0;z)=\frac{a^f_{m+1}(0;z)}{a^f_0(0;z)}-A_{m+1} \left
 ( \frac{a^f_1(0;z)}{a^f_0(0;z)},...,\frac{a^f_m(0;z)}{a^f_0(0;z)}\right )=f_{m+1}(z).$$
So the following equalities hold
\begin{align*}
&\frac{d}{dt} B^f_{m+1}(0;z_1+t\sum_{i=1}^n s_1^iz_i,...,z_n+t\sum_{i=1}^n s_n^iz_i)\Bigr|_{t=0}\\
&=\frac{d}{dt} f_{m+1}(z_1+t\sum_{i=1}^n
s_1^iz_i,...,z_n+t\sum_{i=1}^n s_n^iz_i)\Bigr|_{t=0}\\
&=\sum_{i,j=1}^{n}s^i_jz_i\frac{\partial f_{m+1}}{\partial t_j}(z).
\end{align*} Putting it in the above expression for $T_{\mu(x)}(GL(n,\mathbb
C).\mu(x))$, we obtain the result.
\end{proof}

\begin{proposition}\label{p.trans}
There exists a smooth hypersurface $Y\subset \BP^n, n \geq 4,$
defined by a homogeneous polynomial $f$ of degree $2m, 2 \leq m
\leq n-1,$ such that, for  a general $x\in \BP^n \setminus
(\BP^{n-1}_{\infty} \cup Y)$, using the terminology of Definition
\ref{d.phi},
$${\rm rank}(d\mu_x)=n,\; \dim_{\mathbb C} T_{\mu(x)}(GL(n,\mathbb
C).\mu(x))=n^2\; \text{ and } $$ $${\rm Im}(d\mu_{x})\cap
T_{\mu(x)}(GL(n,\mathbb C).\mu(x))=0.$$
\end{proposition}

\begin{proof}
First, consider the case $m=2$. Set
$$f(t_1,...,t_n)=t_0^4+b(t_1^3+\cdots+t_n^3)t_0+(t_1^{4}+\cdots+t_n^{4})+
\sum_{1\leq i_1<i_2<i_3<i_4\leq n} c
t_{i_1}t_{i_2}t_{i_3}t_{i_4}$$ with some constants $b,c\in \mathbb
C^*$. Using Notation \ref{n.f_k}, we have
$$f_1=f_2=0, \; f_{3}=b(t_1^3+\cdots+t_n^3) \mbox{ and }$$
$$f_{4}=(t_1^{4}+\cdots+t_n^{4})+\sum_{1\leq i_1<i_2<i_3<i_4\leq
n}c t_{i_1}t_{i_2}t_{i_3}t_{i_4}.$$  Since the Fermat hypersurface
in $\BP^n$ defined by
 $t_0^{4}+t_1^{4}+\cdots+t_n^{4}=0$ is smooth, the hypersurface
 $Y$
 defined by $f=0$ is smooth  if we choose  general $b$ and $c$.
 Set $x:=[1:0:\cdots:0]$. By Propositions \ref{p.dphi} and
\ref{p.ImOb}, we have
\begin{align*}{\rm Im}(d\mu_x)&= \left\{\sum_{i=1}^{n}v_i\frac{\partial
f_{4}}{\partial t_i}(z) \ \Bigr| \ v_i\in \mathbb C\right\}=
\left\{\sum_{i=1}^n v_i(4z_i^3+\sum_{1\leq i_1<i_2<i_3\leq
n,\forall i_k\neq i}c z_{i_1}z_{i_2}z_{i_3}) \ \Bigr| \ v_i\in
\mathbb C\right\}\end{align*} and
\begin{align*}T_{\mu(x)}(GL(n,\mathbb
C).\mu(x))&= \left\{\sum_{i,j=1}^ns^i_jz_i\frac{\partial
f_{3}}{\partial t_j}(z) \ \Bigr| \ s^i_j\in \mathbb
C\right\}=\left\{\sum_{i,j=1}^ns^i_jz_iz_j^2\ \Bigr|  \ s^i_j\in
\mathbb C\right\} .\end{align*} From this it follows that ${\rm
rank}(d\mu_x)=n$ and $\dim_{\mathbb C} T_{\mu(x)}(GL(n,\mathbb
C).\mu(x))=n^2$. Also if
 there exist $s^i_j$ and $v_i$ such that
$$\sum_{i,j=1}^ns^i_jz_iz_j^2=\sum_{i=n}^nv_i(4z_i^3+\sum_{1\leq i_1<i_2<i_3\leq n,
\forall i_k\neq i} c z_{i_1}z_{i_2}z_{i_3}),$$then  $s^i_j=0$ and
$v_i=0$ for all $i$ and $j$. Therefore
$${\rm Im}(d\mu_{x})\cap T_{\mu(x)}(GL(n,\mathbb C).\mu(x))=0.$$

Next, assume that $m\geq 3$. Pick $$f(t_0, \ldots, t_n) = t_0^{2m}
+b(t_1^{m+1}+\cdots+t_n^{m+1})t_0^{m-1} +
ct_1t_2t_3(t_4^{m-1}+\cdots+t_n^{m-1})t_0^{m-2}+t_1^{2m}+\cdots+t_n^{2m}$$
 with some constants $b,c\in
\mathbb C^*$. Using Notation \ref{n.f_k}, we have
$$f_1=\cdots=f_m=f_{m+3}=\cdots=f_{2m-1}=0, \;
f_{m+1}=b(t_1^{m+1}+\cdots+t_n^{m+1}),$$
$$f_{m+2}=ct_1t_2t_3(t_4^{m-1}+\cdots+t_n^{m-1}) \mbox{ and }
f_{2m}=t_1^{2m}+\cdots+t_n^{2m}.$$  From the  smoothness of the
Fermat hypersurface in $\BP^n$ defined by
 $t_0^{2m}+t_1^{2m}+\cdots+t_n^{2m}=0$, we can see that the hypersurface
 $Y$
 defined by $f=0$ is smooth  for  general $b$ and $c$. Set $x:=[1:0:\cdots:0]$.
 Propositions \ref{p.dphi} and
\ref{p.ImOb} show that
\begin{align*}{\rm Im}(d\mu_x)&=\left\{\sum_{i=1}^{n}v_i\frac{\partial
f_{m+2}}{\partial t_i}(z)\ \Bigr| \ v_i\in \mathbb C\right\}\\&
=\left\{(v_1z_2z_3+v_2z_1z_3+v_3z_1z_2)(z_4^{m-1}+\cdots+z_n^{m-1})+\sum_{i=4}^nv_iz_1z_2z_3z_i^{m-2}\
\Bigr|\ v_i\in \mathbb C\right\}\end{align*} and
\begin{align*}T_{\mu(x)}(GL(n,\mathbb
C).\mu(x))&=\left\{\sum_{i,j=1}^ns^i_jz_i\frac{\partial
f_{m+1}}{\partial t_j}(z)\ \Bigr| \ s^i_j\in \mathbb C \right\}
=\left\{\sum_{i,j=1}^ns^i_jz_iz_j^m\ \Bigr| \ s^i_j\in \mathbb C
\right\} .\end{align*} The condition $m \geq 3$ implies  that
${\rm rank}(d\mu_x)=n$. It is easy to see that $$ \dim_{\mathbb C}
T_{\mu(x)}(GL(n,\mathbb C).\mu(x))=n^2 \mbox{ and }  {\rm
Im}(d\mu_{x})\cap T_{\mu(x)}(GL(n,\mathbb C).\mu(x))=0.$$
\end{proof}

\begin{proof}[Proof of Theorem \ref{t.LR}]
By Proposition \ref{p.isom}, we may prove the corresponding
statement for the morphism $\eta: W \to {\rm Hilb}(\BP^{n-1})$
defined on a neighborhood $W$ of a general point in $\BP^n$ by
$\eta(y):= [\sE^Y_y]$ for $y \in W$.   Since $\sE^Y_x$ is a
complete intersection of multi-degree $(m+1, \ldots, 2m)$ for
general $Y$ and general $x \in \BP^n \setminus Y$, the equation
$B_{m+1}$ of degree $m+1$ is uniquely determined up to
$GL(n,\mathbb C)$-action by the projective equivalence class of
$\mathcal E_x^Y$. Thus it suffices to show that ${\rm
rank}(d\mu_x)=n$ and $d\mu_x(T_x(\BP^n)) \cap
T_{\mu(x)}(GL(n,\mathbb C).\mu(x))=0$ for a general $Y$ and
general $x$. This follows from Proposition \ref{p.trans}.
\end{proof}

\section{Projective connections and rigidity of maps}

\begin{definition}\label{d.connection}
Given a complex manifold $M$ of dimension $n$, the  projectivized
tangent bundle $\pi: \BP T(M) \to M$ is equipped with the
tautological line bundle $\xi \subset \pi^*T(M)$ whose fiber at
$\alpha \in \BP T(M)$ is given by $\hat{\alpha} \subset
T_{\pi(\alpha)}(M)$,  the 1-dimensional subspace corresponding to
$\alpha \in \BP T_{\pi(\alpha)} (M).$ We have the vector subbundle
$\sT \subset T(\BP T(M))$ of rank $n$ whose fiber at $\alpha \in
\BP T(M)$ is given by $$\sT_{\alpha} :=
d\pi_{\alpha}^{-1}(\hat{\alpha})$$ where $d \pi_{\alpha} :
T_{\alpha}(\BP T(M)) \to T_{\pi(\alpha)}(M)$ is the differential
of the projection $\pi.$ A {\em projective connection} on $M$ is a
homomorphism $p: \xi \to \sT$ of vector bundles which splits the
exact sequence of vector bundles on $\BP T(M)$
$$ 0 \longrightarrow T^{\pi} \longrightarrow \sT \longrightarrow
\sT/T^{\pi} \cong \xi \longrightarrow 0$$ where $T^{\pi} \subset
T(\BP T(M))$ is the relative tangent bundle of $\pi$.  Given a
projective connection $p: \xi \to \sT$, the image $p(\xi) \subset
\sT \subset T(\BP T(M))$ is a line subbundle in the tangent bundle
of $\BP(T(M))$ and defines a foliation of rank 1 on $\BP T(M)$.
\end{definition}

\begin{example}\label{e.P} On $\BP^n$, we have a canonical
projective connection $p: \xi \to \sT$ such that the leaves of the
foliation $p(\xi)$ are exactly the tangent directions of lines on
$\BP^n$. We call this the {\em flat projective connection} and
denote it by $p^{\rm flat}$. Let $U$ be a connected complex
manifold of dimension $n$ and let $\varphi: U \to \BP^n$ be an
immersion. Via the biholomorphic morphism $\BP T(U) \cong \BP
T(\varphi(U))$, we have an induced projective connection
$\varphi^* p^{\rm flat}$ on $U$. By the affirmative answer to
Problem \ref{q.Liouville} when $X = \BP^n$ (see the remark after
Problem \ref{q.Liouville}), two immersions $\varphi_i: U \to
\BP^n, i = 1,2,$ are related by a projective transformation, i.e.,
there exists an automorphism $\psi: \BP^n \to \BP^n$ such that
$\varphi_2 = \psi \circ \varphi_1$, if and only if the two
projective connections $\varphi_1^* p^{\rm flat}$ and $
\varphi_2^* p^{\rm flat}$ coincide.
\end{example}

\begin{proposition}\label{p.uniqueconnection}
In the setting of Definition \ref{d.connection}, let $\sC \subset
\BP T(M)$ be a closed subvariety dominant over $M$ such that for a
general point $x \in M$, the fiber $\sC_x \subset \BP T_x(M)$ is
not contained in a quadric hypersurface. Suppose that $p_1, p_2:
\xi \to \sT$ are two projective connections on $M$ such that
$p_1|_{\sC} = p_2|_{\sC}.$ Then $p_1=p_2$.
\end{proposition}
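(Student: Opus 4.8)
The plan is to measure the difference of the two projective connections and to show it is forced to vanish precisely because $\sC_x$ lies on no quadric. Since $p_1$ and $p_2$ both split the exact sequence of Definition \ref{d.connection}, their difference $\delta := p_1 - p_2$ takes values in the kernel $T^{\pi}$ of $\sT \to \xi$; thus $\delta$ is a section of ${\rm Hom}(\xi, T^{\pi}) = \xi^* \otimes T^{\pi}$ over $\BP T(M)$, and the hypothesis $p_1|_{\sC} = p_2|_{\sC}$ says exactly that $\delta$ vanishes along $\sC$. Because $\sC$ is dominant over $M$, it suffices to prove that $\delta$ vanishes on the whole fiber $\BP T_x(M)$ for a general $x$: then $\delta$ vanishes on a dense subset of $\BP T(M)$ and hence identically, giving $p_1 = p_2$.

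Fix such a general $x$ and restrict to the fiber $\BP T_x(M) \cong \BP^{n-1} = \BP(T_x(M))$. There $\xi$ is the tautological subbundle $\sO(-1)$ and $T^{\pi}$ is the relative tangent bundle $T_{\BP^{n-1}} \cong \sO(1) \otimes Q$, where $Q$ denotes the universal quotient bundle; hence $\delta|_{\BP T_x(M)}$ is a section of $\xi^* \otimes T^{\pi} \cong \sO(2) \otimes Q \cong T_{\BP^{n-1}}(1)$. In linear coordinates $z = (z_1, \ldots, z_n)$ on $T_x(M)$ such a section is represented by an $n$-tuple of quadratic forms $(q_1(z), \ldots, q_n(z))$, i.e. by the quadratic map $z \mapsto \sum_i q_i(z)\,\partial_{z_i}$, well defined modulo the Euler-type ambiguity $q_i \mapsto q_i + z_i\,\ell$ for a linear form $\ell$. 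The section vanishes at a point $[z]$ exactly when $\sum_i q_i(z)\,\partial_{z_i} \in \langle z\rangle \subset T_x(M)$. Consequently $\delta|_{\sC_x} = 0$ means that $\sum_i q_i(z)\,\partial_{z_i} = \rho(z)\,\sum_i z_i\,\partial_{z_i}$ on the affine cone over $\sC_x$, for some scalar $\rho(z)$ homogeneous of degree one.

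The heart of the proof is then a fiber lemma: if $\sC_x$ is contained in no quadric, such a section must be zero, i.e. $q_i = z_i\,\ell_0$ for a single linear form $\ell_0$. The mechanism is that the scalar $\rho = q_i/z_i$ is independent of $i$ on the cone (the cubics $z_i q_j - z_j q_i$ vanish there) and patches to a regular section of $\sO_{\sC_x}(1)$; since $\sC_x$ is nondegenerate and linearly normal — as holds for the complete intersections furnished by Theorem \ref{t.VMRT} — this $\rho$ is the restriction of a genuine linear form $\ell_0$. Subtracting, the quadratic forms $q_i - z_i\,\ell_0$ all vanish identically on $\sC_x$, and because $\sC_x$ lies on no quadric each $q_i - z_i\,\ell_0 = 0$, so $\delta$ vanishes on the fiber. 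This is precisely where the hypothesis is consumed, and it is the main obstacle: one must convert the merely pointwise proportionality $\sum_i q_i(z)\,\partial_{z_i} \parallel z$ along $\sC_x$ into honest quadratic equations satisfied by $\sC_x$. With the fiber lemma established, the globalization of the first paragraph completes the proof.
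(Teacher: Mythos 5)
Your global strategy is the same as the paper's: set $\delta = p_1 - p_2$, view it as a section of $T^{\pi}\otimes\xi^{-1}$ on $\BP T(M)$ vanishing along $\sC$, restrict to a general fiber $\BP T_x(M)\cong\BP^{n-1}$ where it becomes a section of $T(\BP^{n-1})\otimes\sO_{\BP^{n-1}}(1)$ vanishing on $\sC_x$, prove that this forces the fiberwise section to vanish, and conclude by density. The divergence is in the fiberwise step, and your extra care there is justified: the zero locus of such a section is the locus where the representing quadratic vector field is proportional to the Euler field, not the common zero locus of the quadrics $q_i$, and the representation is defined only modulo $q_i\mapsto q_i+z_i\ell$. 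The paper handles this step with the assertion that the zero set of a nonzero section of $T(\BP^{n-1})\otimes\sO(1)$ lies on a quadric hypersurface; that assertion is false precisely because of the ambiguity you isolate. For instance, on $\BP^2$ the tuple $(z_1^2,z_2^2,z_3^2)$ defines a nonzero section whose zero locus consists of the seven points $[1:0:0]$, $[0:1:0]$, $[0:0:1]$, $[1:1:0]$, $[1:0:1]$, $[0:1:1]$, $[1:1:1]$, and these lie on no conic.

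Nevertheless, your own fiber lemma has a genuine gap: it requires linear normality of $\sC_x$, which is not among the hypotheses of the Proposition (only the absence of a quadric through $\sC_x$ is assumed), and the seven-point example shows this requirement cannot simply be dropped --- the lemma, and hence the Proposition as literally stated, is false for arbitrary such $\sC_x$. More seriously for the intended application, your justification ``as holds for the complete intersections furnished by Theorem \ref{t.VMRT}'' breaks down exactly when $m=n-1$: there $\sC_x$ is a finite complete intersection of $(m+1)(m+2)\cdots(2m)>n$ points, and a finite set of more than $n$ points in $\BP^{n-1}$ is never linearly normal, since $h^0(\sO_{\sC_x}(1))=\#\sC_x$. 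This is precisely the case needed for Theorem \ref{t.map}. A repair that covers all $2\le m\le n-1$ argues instead with the degree-$3$ graded piece of the homogeneous ideal: vanishing of $\delta$ on $\sC_x$ says exactly that the cubics $c_{ij}=z_iq_j-z_jq_i$ lie in $I(\sC_x)_3$; for a complete intersection of multi-degree $(m+1,\ldots,2m)$ this space is $0$ when $m\ge3$, and is spanned by the single cubic generator $g$ when $m=2$, in which case writing $c_{ij}=\lambda_{ij}g$ and substituting into the Koszul identity $z_kc_{ij}-z_jc_{ik}+z_ic_{jk}=0$ gives $(\lambda_{ij}z_k-\lambda_{ik}z_j+\lambda_{jk}z_i)g=0$, forcing all $\lambda_{ij}=0$. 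Either way every $c_{ij}$ vanishes identically, which yields $q_i=z_i\ell$ for a single linear form $\ell$, i.e. the fiberwise section is zero, with no normality hypothesis. You should either prove your lemma in this form or add the corresponding ideal-theoretic (degree-$2$ and degree-$3$) hypotheses to the statement of the Proposition.
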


\begin{proof} Since $p_1$ and $p_2$ split the exact sequence
in Definition \ref{d.connection},  the  difference $p_1-p_2$
determines an element $\sigma \in H^0( \BP T(M), T^{\pi} \otimes
\xi^{-1}).$ For a general $x \in M,$ $\sigma_x $ is a section of
$T(\BP T_x(M))\otimes \xi^{-1}$ on the projective space $\BP T_x(M)$.
 The condition $p_1|_{\sC} = p_2|_{\sC}$ implies that $\sigma_x$ vanishes
on the subvariety $\sC_x$. In term of a homogeneous coordinate
system on projective space $\BP^{n-1}$, a nonzero  section of
$T(\BP^{n-1})\otimes \sO_{\BP^{n-1}}(1)$ is represented by a
homogeneous polynomial vector field with quadratic coefficients.
In particular, the zero set of such a section must be contained in
some quadric hypersurface.  By the assumption that $\sC_x$ is not
contained in a quadric hypersurface, we see that $\sigma_x =0$.
Since it is true for a general $x \in M$, we obtain $p_1=p_2$.
\end{proof}

We have the following general version of Theorem \ref{t.germ}. In
fact, Theorem \ref{t.germ} is a corollary of Theorem
\ref{t.general} by  Theorem \ref{t.VMRT}.

\begin{theorem}\label{t.general} Let $X$ be a Fano manifold.
For a general point $x \in X$, we denote by  $\sK_x$ the space of
minimal rational curves through $x$ and  by $\sC_x \subset \BP
T_x(X)$ the VMRT at $x$. Assume that $\sC_x$ is not contained in a
quadric hypersurface in $\BP T_x(X)$.  Let $U \subset X$ be a
connected neighborhood of a general point $x \in X$ and
$\varphi_1, \varphi_2: U \to \BP^n$ be two biholomorphic
immersions such that for any $y \in U$ and any member $C$ of
$\sK_y$, both $\varphi_1(C \cap U)$ and $\varphi_2(C \cap U)$ are
contained in  lines in $\BP^n$. Then there exists a projective
transformation $\psi: \BP^n \to \BP^n$ such that $\varphi_2= \psi
\circ \varphi_1$.
\end{theorem}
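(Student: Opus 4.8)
The plan is to reduce the statement to the uniqueness of projective connections established in Proposition \ref{p.uniqueconnection}, using the flat projective connection on $\BP^n$ together with Example \ref{e.P}. The key observation is that the hypothesis ``$\varphi_i(C \cap U)$ is contained in a line'' is precisely the statement that the tangent directions of members of $\sK_y$ are leaves of the pulled-back flat connection. First I would form, for each $i=1,2$, the induced projective connection $p_i := \varphi_i^* p^{\rm flat}$ on $U$, as in Example \ref{e.P}; this makes sense because each $\varphi_i$ is a biholomorphic immersion, so the induced isomorphism $\BP T(U) \cong \BP T(\varphi_i(U))$ transports $p^{\rm flat}$ to a genuine projective connection on $U$.

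Next I would identify the VMRT as a subvariety $\sC \subset \BP T(U)$ that is preserved by both connections in the sense required by Proposition \ref{p.uniqueconnection}. For a member $C$ of $\sK_y$ with $y \in U$, its tangent direction at $y$ is a point of $\sC_y \subset \BP T_y(U)$, and these sweep out (over the general points $y$) a closed subvariety $\sC$ dominant over $U$. The crucial point is the local geometry: since $\varphi_i(C \cap U)$ lies on a line $\ell_i \subset \BP^n$ and $p^{\rm flat}$ has the tangent directions of lines as its leaves, the leaf of $p_i(\xi)$ through the point $\tau_y(C) \in \sC_y$ is exactly the curve traced by the tangent directions of $\ell_i$, which (pulled back by $\varphi_i$) agrees with the tangent directions of $C$ itself. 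Thus along $\sC$ both $p_1$ and $p_2$ send the tautological line into the tangent direction of the minimal rational curve $C$, giving $p_1|_{\sC} = p_2|_{\sC}$.

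Having arranged $p_1|_{\sC}=p_2|_{\sC}$, I would invoke the hypothesis that $\sC_x$ is not contained in a quadric hypersurface, which is exactly the standing assumption of Proposition \ref{p.uniqueconnection}, to conclude $p_1 = p_2$ on $U$; that is, $\varphi_1^* p^{\rm flat} = \varphi_2^* p^{\rm flat}$. Finally, by the characterization in Example \ref{e.P} (the affirmative answer to Problem \ref{q.Liouville} for $X=\BP^n$), the equality of the two pulled-back flat connections yields a projective transformation $\psi: \BP^n \to \BP^n$ with $\varphi_2 = \psi \circ \varphi_1$, completing the proof.

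The main obstacle I anticipate is the second step: verifying rigorously that the condition ``$\varphi_i(C\cap U)$ is contained in a line'' translates into the leafwise identity $p_i|_{\sC}=p_i'|_{\sC}$ at the level of the foliation on $\BP T(U)$. One must check that the tangent direction of $C$ at $y$, as a point $\alpha \in \BP T_y(U)$, has the property that the leaf of $p_i(\xi)$ through $\alpha$ coincides with the tangential lift of $C$ --- i.e. that the curve $C$ lifts to an integral curve of the line subbundle $p_i(\xi) \subset \sT$. This follows because $\varphi_i$ carries $C$ into a line, whose tangential lift is a leaf of $p^{\rm flat}$, and biholomorphic immersions carry tangential lifts to tangential lifts under the induced isomorphism $\BP T(U)\cong \BP T(\varphi_i(U))$; but making this compatibility precise, and checking that it forces the splitting maps (not merely the foliations) to agree along $\sC$, is the delicate point of the argument.
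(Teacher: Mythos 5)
Your proposal is correct and follows essentially the same route as the paper's proof: pull back the flat projective connection via $\varphi_1$ and $\varphi_2$, show the two connections agree along $\sC$ because the tangential lifts of minimal rational curves are integral curves of both (this is exactly how the paper handles your ``delicate point,'' phrasing it as the vanishing of the difference section in $H^0(\BP T(U), T^{\pi}\otimes \xi^{-1})$ along the curves $\BP T(C^o)\cap \BP T(U)$, which cover a dense subset of $\sC \cap \BP T(U)$), then apply Proposition \ref{p.uniqueconnection} and conclude via Example \ref{e.P}. No substantive differences.
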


\begin{proof}
Let $\sC \subset \BP T(X)$ be the closure of the union of $\sC_x
\subset \BP T_x(X)$ as $x$ varies over general points of $X$. For
a member $C$ of $\sK_x$ and its smooth locus $C^o \subset C$, the
curve $\BP T(C^o) \subset \BP T(X)$ lies in $\sC$. In fact, by the
definition of $\sC$ such curves cover a dense open subset in
$\sC$.

Consider the projective connections $\varphi_i^* p^{\rm flat}$ on
$U$. Let $C$ be a general minimal rational curve intersecting $U$.
Since $\varphi_1(C \cap U)$ and $\varphi_2(C \cap U)$ are
contained in lines in $\BP^n$, the difference $$\varphi_1^* p^{\rm
flat} -\varphi_2^* p^{\rm flat} \in H^0(\BP T(U), T^{\pi} \otimes
\xi^{-1}),$$ in the notation of Definition \ref{d.connection} with
$M=U$, vanishes along the Riemann surface $\BP T(C^o) \cap \BP
T(U)$. Since such Riemann surfaces cover a dense open subset in
$\sC \cap \BP T(U)$, the two projective connections must agree on
$\sC \cap \BP T(U)$. Applying Proposition \ref{p.uniqueconnection}
with $M=U$, we conclude $\varphi_1^* p^{\rm flat} = \varphi_2^*
p^{\rm flat}.$ As mentioned in Example \ref{e.P}, this implies the
existence of a projective transformation $\psi$ satisfying
$\varphi_2 = \psi \circ \varphi_1.$ \end{proof}

\begin{proof}[Proof of Theorem \ref{t.map}]
Putting $m=n-1$ in the proof of Proposition \ref{p.isom}, we see
that minimal rational curves on $X_i, i=1,2,$ have trivial normal
bundles and rational curves through general points with trivial
normal bundles are minimal rational curves. By Proposition 6 of
\cite{HM03} (also cf. Theorem 3.1 (iv) in \cite{S}), for a general
minimal rational curve $C \subset X_2$, each irreducible component
of $f^{-1}(C)$ is a minimal rational curve in $X_1$. In other
words, $f$ sends minimal rational curves of $X_1$ through a
general point to those of $X_2$.  Putting $$\hat{X}= X_1, X = X_2,
g=f, \phi= \phi_2, \mbox{ and } h = \phi_1$$ in Corollary
\ref{c.finite}, we see that $\phi_1 = \psi \circ \phi_2 \circ f$
for some projective transformation $\psi$. Thus $f$ must be
birational, and hence an isomorphism.
\end{proof}

\begin{proof}[Proof of Theorem \ref{t.Liouville}]
Applying Theorem \ref{t.germ} to $\varphi:= \phi_2 \circ \gamma:
U_1 \to \phi_2(U_1) \subset \BP^n$, we have a projective
transformation $\psi \in {\rm Aut}(\BP^n)$ such that $\psi \circ
\phi_1|_{U_1} =  \phi_2 \circ \gamma$. By the assumption on
$\gamma$ and Proposition \ref{p.mrc}, we have $d\psi(\sE^{Y_1}_x)
= \sE^{Y_2}_{\psi(x)}$ for $x \in \phi_1(U_1)$. By Proposition
\ref{p.unique}, this implies $\psi(Y_1)= Y_2.$ Thus replacing
$Y_1$ by $\psi(Y_1)$ and $\phi_1$ by $\psi \circ \phi_1$, we may
assume that $Y_1=Y_2$ and $\phi_1(U_1) = \phi_2(U_2)$. From Lemma
\ref{l.unique},  there exists a biregular
 morphism $\Gamma: X_1 \to X_2$ with $\Gamma|_{U_1} = \gamma$. \end{proof}

\end{document}